\documentclass[11pt]{article}

\usepackage[english]{babel}
\usepackage{amssymb}
\usepackage{amsmath}
\usepackage{enumerate}
\usepackage{amsthm}
\usepackage{amsfonts}
\usepackage{listings}
\usepackage{xypic}
\usepackage{hyperref}

\usepackage{MnSymbol}

\title{Exactness of Bousfield localizations of simplicial presheaves and local lifting}
\date{April 2021}
\author{Fritz H\"ormann\\ Mathematisches Institut, Albert-Ludwigs-Universit\"at Freiburg}

\usepackage[numbers,sort&compress]{natbib}

\lstset{breaklines=true, language=C++}

\usepackage{color}

\oddsidemargin 0pt \evensidemargin \oddsidemargin\marginparwidth 0.5in     \textwidth 6.5in
\setlength{\topmargin}{-2cm} \textheight 8.9in 

\usepackage{tikz}

\newcommand{\comment}[1]{}

\setlength{\parindent}{0cm}

\newtheorem{SATZ}{Theorem}[section]

\newtheorem{LEMMA}[SATZ]{Lemma}
\newtheorem{DEF}[SATZ]{Definition}
\newtheorem{PROP}[SATZ]{Proposition}

\newtheorem{BEISPIEL}[SATZ]{Example}

\newtheorem{KOR}[SATZ]{Corollary}

\newtheorem{BEM}[SATZ]{Remark}

\newtheoremstyle{bare}        
  {}            
  {}            
  {\normalfont}                 
  {}                            
  {\bfseries}                   
  {}                            
  {.0em}                           
  {\thmnumber{#2}#1. \thmnote{\normalfont\textsc{(#3)}} } 

\theoremstyle{bare}
\newtheorem{PAR}[SATZ]{}

\DeclareMathOperator{\Fib}{Fib}
\DeclareMathOperator{\Cof}{Cof}

\DeclareMathOperator{\colim}{colim}

\DeclareMathOperator{\Hom}{Hom}

\DeclareMathOperator{\op}{op}

\begin{document}

\maketitle

{\footnotesize  {\em 2010 Mathematics Subject Classification:} 18N40, 55U35  }

{\footnotesize  {\em Keywords:} left Bousfield localizations, exact localizations, simplicial presheaves, \v{C}ech localizations, $(\infty, 1)$-topoi }

\section*{Abstract}
We show that weak equivalences in a (cofibrantly generated) left Bousfield localization of the projective model category of simplicial presheaves can be characterized by a local lifting property
if and only if the localization is exact. 

\tableofcontents

\section{Introduction}

Let $\mathcal{S}$ be a small category with Grothendieck topology. 
Dugger, Hollander and Isaksen \cite{DHI04} showed as a byproduct of their proofs that weak equivalences in the Bousfield localization of simplicial presheaves on $\mathcal{S}$ at all hypercovers
can be characterized by a local lifting property which itself involves {\em hypercovers} of objects in $\mathcal{S}$ and their refinements. 
This article grew out of an attempt to generalize a similar statement to \v{C}ech weak equivalences (i.e.\@ weak equivalences in the localization at the \v{C}ech covers) with
the hope to get a better understanding of these. It is shown that such a characterization by local lifting is possible more generally --- by purely formal reasons --- whenever the
localization is exact (i.e.\@ as functor of derivators, or $(\infty, 1)$-categories, commutes with homotopically finite homotopy limits).

The main result is the following: 

\vspace{0.3cm}

{\bf Theorem~\ref{MAINTHEOREMEXACT}.} 
Let $\mathcal{S}$ be a small category. 
Choose the projective model category structure on $\mathcal{SET}^{\mathcal{S}^{\op} \times \Delta^{\op}}$. Consider a (cofibrantly generated) left Bousfield localization with class $\mathcal{W}_{loc}$ of weak equivalences. 
Let $\mathcal{COV}$ be a subcategory of coverings satisfying (C1)--(C4) below.
The following are equivalent:
\begin{enumerate}
\item $\mathcal{W}_{loc}$ is stable under pull-back along fibrations;
\item $S$, a generating set of cofibrations, goes to $\mathcal{W}_{loc}$ under pull-back along fibrations with cofibrant source;
\item The left Bousfield localization is exact, i.e.\@ the localization functor (left adjoint) commutes with homotopically finite homotopy limits;
\item $\mathcal{W}_{loc} \subset \mathcal{W}_{\mathcal{COV}}$;
\item $\mathcal{W}_{loc} = \mathcal{W}_{\mathcal{COV}}$.
\end{enumerate}

\vspace{0.3cm}

In the Theorem $\mathcal{COV} \subset \mathcal{SET}^{\mathcal{S}^{\op} \times \Delta^{\op}}$ is a subcategory of ``coverings'' which satisfies the following axioms:
\begin{enumerate}
\item[(C1)] Each object of $\mathcal{COV}$ is cofibrant;
\item[(C2)] Every representable presheaf (considered as constant simplicial presheaf) is in $\mathcal{COV}$;

\item[(C3)] Each morphism of $\mathcal{COV}$ is in $\mathcal{W}_{loc}$;
\item[(C4)] If $Y \in \mathcal{COV}$ and $Y' \rightarrow Y$ is in $\Fib \cap \mathcal{W}_{loc}$ with $Y'$ cofibrant then $Y' \rightarrow Y$ is in $\mathcal{COV}$.
\end{enumerate}

The class $\mathcal{W}_{\mathcal{COV}}$ consists by definition of those morphisms $f$ for which there is a diagram
\[ \xymatrix{
X \ar[r]^{\mathcal{W}}  \ar[d]_f & X' \ar[d]^{f'} \\
Y \ar[r]_{\mathcal{W}} & Y'
}\]
where $f'$ has the local homotopy lifting property w.r.t.\@ $\mathcal{COV}$ (cf.\@ Definition~\ref{DEFHLP}), $X'$ and $Y'$ are 
fibrant and the horizontal morphisms are in $\mathcal{W}$. 

The $(\infty, 1)$-category defined by a cofibrantly generated exact localization of $\mathcal{SET}^{\mathcal{S}^{\op} \times \Delta^{\op}}$ as in Theorem~\ref{MAINTHEOREMEXACT} is
by definition an {\bf $(\infty,1)$-topos}. If the localization is furthermore {\bf topological} in the sense of Lurie \cite[Definition~6.2.1.4]{Lur09}, it can be shown that
it corresponds to a Grothendieck topology on $\mathcal{S}$ and is given by the \v{C}ech localization considered in Example~\ref{EX1}. However, also the localization at all hypercovers, which presents
the hypercompletion of this $(\infty,1)$-topos is still exact.

\section*{Notation}

Let $\mathcal{S}$ be a small category.
Recall that the category $\mathcal{SET}^{\mathcal{S}^{\op} \times \Delta^{\op}}$ of
 simplicial presheaves on $\mathcal{S}$ is a simplicial category which is tensored and cotensored. We denote the corresponding 
 functors by 
 \[ \otimes:  \mathcal{SET}^{\Delta^{\op}} \times \mathcal{SET}^{\mathcal{S}^{\op} \times \Delta^{\op}} \rightarrow \mathcal{SET}^{\mathcal{S}^{\op} \times \Delta^{\op}}, \]
 and
 \[ \Hom:  (\mathcal{SET}^{\Delta^{\op}})^{\op} \times \mathcal{SET}^{\mathcal{S}^{\op} \times \Delta^{\op}} \rightarrow \mathcal{SET}^{\mathcal{S}^{\op} \times \Delta^{\op}}. \]
 For a morphism $f: X\rightarrow Y$ of simplicial sets  and a morphism $g: A \rightarrow B$ of simplicial presheaves we denote by
 \[ f \boxplus g: (X \otimes B) \oplus_{(X \otimes A)} (Y \otimes A) \rightarrow (Y \otimes B)  \]
 the induced morphism and likewise
 \[ \boxdot \Hom(f, g): \Hom(Y, A) \rightarrow \Hom(Y, B) \times_{\Hom(X, B)} \Hom(X, A).  \]



\section{Exactness of left Bousfield localizations}

In this section let $(\mathcal{M}, \Cof, \Fib, \mathcal{W})$ be a right proper model category and
let $(\mathcal{M}, \Cof, \Fib_{loc}, \mathcal{W}_{loc})$ be a left Bousfield localization thereof.

\begin{DEF}\label{DEFP}
A morphism $f$ has {\bf property $\mathbf{P}$} if any pull-back of $f$ is in $\mathcal{W}_{loc}$. A morphism $f$ has {\bf property $\mathbf{P}_{fib}$} if any pull-back of $f$ along a fibration is in $\mathcal{W}_{loc}$.
\end{DEF}
Obviously we have 
\[ f \text{ has } \mathbf{P} \Rightarrow f \text{ has } \mathbf{P}_{fib} \Rightarrow  f \in \mathcal{W}_{loc}  \]
and also (right properness and $\mathcal{W} \subset \mathcal{W}_{loc}$):
\[ f \in \mathcal{W} \Rightarrow  f \text{ has } \mathbf{P}_{fib}.  \]

\begin{LEMMA}\label{LEMMAP1}
\begin{enumerate}
\item If a fibration has property $\mathbf{P}_{fib}$ then it has property $\mathbf{P}$.
\item If $f = hg$ and $g$ has property $\mathbf{P}_{fib}$ then $f$ has property $\mathbf{P}_{fib}$ if and only if $h$ has property $\mathbf{P}_{fib}$.
\item If $f$ is a morphism with property $\mathbf{P}_{fib}$ and $f = h g$ with $g$ trivial cofibration and
$h$ fibration, then $h$ has property $\mathbf{P}$.
\end{enumerate}
\end{LEMMA}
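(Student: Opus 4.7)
The plan is to prove the three statements in order, since (2) and (3) will both invoke (1). The main tools are right properness of $\mathcal{M}$ (so that the pullback of a weak equivalence along a fibration is a weak equivalence), the inclusion $\mathcal{W} \subseteq \mathcal{W}_{loc}$ recorded before the lemma, 2-out-of-3 for $\mathcal{W}_{loc}$, and the stability of fibrations under pullback.

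For (1), suppose $f: X \to Y$ is a fibration with property $\mathbf{P}_{fib}$ and let $g: Z \to Y$ be an arbitrary morphism. The key is to factor $g = g_2 g_1$ with $g_1: Z \to Z'$ a trivial cofibration and $g_2: Z' \to Y$ a fibration, and then assemble a composite pullback square with intermediate object $X \times_Y Z'$. The map $X \times_Y Z' \to Z'$ is the pullback of $f$ along the fibration $g_2$, hence lies in $\mathcal{W}_{loc}$ by hypothesis on $f$; and the map $X \times_Y Z \to X \times_Y Z'$ is the pullback of the trivial cofibration $g_1$ along the fibration $X \times_Y Z' \to Z'$, so it lies in $\mathcal{W}$ by right properness, hence in $\mathcal{W}_{loc}$. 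Since $g_1 \in \mathcal{W} \subseteq \mathcal{W}_{loc}$ as well, 2-out-of-3 applied to the outer rectangle shows that the pullback $X \times_Y Z \to Z$ of $f$ along $g$ belongs to $\mathcal{W}_{loc}$.

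For (2), let $f = hg$ with $g$ having $\mathbf{P}_{fib}$ and pull back along an arbitrary fibration $p$ into the codomain of $f$. Pulling back $h$ along $p$ yields a fibration $p'$, and pulling back $g$ along $p'$ yields a map which composes with the pullback of $h$ along $p$ to give the pullback of $f$ along $p$. The pullback of $g$ along $p'$ lies in $\mathcal{W}_{loc}$ by assumption, so 2-out-of-3 gives that the pullback of $f$ along $p$ is in $\mathcal{W}_{loc}$ if and only if the pullback of $h$ along $p$ is; this is exactly the claim.

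For (3), one first notes — as already recorded immediately before the lemma — that every trivial cofibration has $\mathbf{P}_{fib}$, since its pullback along any fibration is a weak equivalence by right properness. Thus in the factorization $f = hg$, the map $g$ has $\mathbf{P}_{fib}$, so (2) promotes the assumption that $f$ has $\mathbf{P}_{fib}$ to $h$ having $\mathbf{P}_{fib}$, and then (1) upgrades this to $h$ having $\mathbf{P}$ because $h$ is a fibration. The only mildly delicate step is the factorization trick in (1); once an arbitrary base change is rewritten as a base change along a fibration composed with the pullback of a trivial cofibration along a fibration, the rest is bookkeeping with right properness and 2-out-of-3.
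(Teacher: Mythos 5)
Your proposal is correct and follows essentially the same route as the paper: for (1) you factor the arbitrary base change as a trivial cofibration followed by a fibration and combine right properness with 2-out-of-3, for (2) you paste the two pullback squares and apply 2-out-of-3, and for (3) you use that trivial cofibrations have $\mathbf{P}_{fib}$ together with parts (2) and (1). No gaps.
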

\begin{proof}
1. Let $w: X \rightarrow Y$ be a fibration with property $\mathbf{P}_{fib}$. 
Let $f$ be an arbitrary morphism. Factor $f = pc$ with $c \in \Cof \cap \mathcal{W}$ and $p \in \Fib$. 
Then in the pull-back
\[ \xymatrix{
\Box  \ar[r]^{c'} \ar[d]_{w''} & \Box \ar[r] \ar[d]^{w'}  & X \ar[d]^{w} \\
Z \ar[r]_c  & \ar[r]_p Z' & Y
}\]
the morphism $w'$ is in $\mathcal{W}_{loc}$ by assumption. It is also in $\Fib$, hence by right properness we get $c' \in \mathcal{W} \subset \mathcal{W}_{loc}$. 
Therefore $w'' \in \mathcal{W}_{loc}$ by 2-out-of-3. 

2. Consider a diagram in which the squares are Cartesian
\[ \xymatrix{
X' \ar[r]^{\Fib} \ar[d]_{\mathcal{W}_{loc}} & X \ar[d]^g \\
Y' \ar[r]_{\Fib} \ar[d] & Y \ar[d]^h \\
Z' \ar[r]_{\Fib} & Z 
}\]
Since $g$ has property $\mathbf{P}_{fib}$ the upper left morphism is in $\mathcal{W}_{loc}$. Hence the statement follows from 2-out-of-3. 

3. $g$ has property $\mathbf{P}_{fib}$ and thus by 2.\@ the same holds for $h$. Therefore, we deduce from 1.\@ that $h$ has property $\mathbf{P}$. 
\end{proof}

\begin{SATZ}\label{SATZPROPEREXACTLOC}
Let $(\mathcal{M}, \Cof, \Fib, \mathcal{W})$ be a right proper model category and
let  $(\mathcal{M}, \Cof, \Fib_{loc}, \mathcal{W}_{loc})$ be a left Bousfield localization. Then the following are equivalent:
\begin{enumerate}
\item $\mathcal{W}_{loc}$ is stable under pull-back along morphisms in $\Fib$;
\item The localization functor (left adjoint) commutes with homotopy
pull-backs (and hence with homotopically finite homotopy limits). We also say that the localization is {\bf  exact}.
\end{enumerate}
\end{SATZ}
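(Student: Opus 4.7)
The two implications require rather different arguments, so I would treat them separately.

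For $(2)\Rightarrow(1)$: given $w\in\mathcal{W}_{loc}$ and a fibration $q\in\Fib$ with common target $Y$, the pull-back square along $q$ is already a homotopy pull-back in $\mathcal{M}$ because $q$ is a fibration. By exactness it remains a homotopy pull-back in $\mathcal{M}_{loc}$. Since $w$ becomes an equivalence there, the standard stability of equivalences under homotopy pull-back in any model category forces the parallel map to be in $\mathcal{W}_{loc}$.

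For $(1)\Rightarrow(2)$, which is the real content, the plan is to show that a strict pull-back along a fibration, which computes the homotopy pull-back in $\mathcal{M}$, also computes the homotopy pull-back in $\mathcal{M}_{loc}$. Given a cospan $A\xrightarrow{f}C\xleftarrow{g}B$, factor $g=b\circ a$ with $a\in\Cof\cap\mathcal{W}$ and $b\in\Fib$ (intermediate object $B_1$), and further factor $b=q\circ j$ with $j\in\Cof\cap\mathcal{W}_{loc}$ and $q\in\Fib_{loc}\subseteq\Fib$ (intermediate object $B_2$). Then $P:=A\times_C B_1$ computes the homotopy pull-back in $\mathcal{M}$, while $P_{loc}:=A\times_C B_2$ computes it in $\mathcal{M}_{loc}$. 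It suffices to show $P\to P_{loc}$ lies in $\mathcal{W}_{loc}$.

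The naive map $P\to P_{loc}$ is the pull-back of $j\in\mathcal{W}_{loc}$ along the projection $A\times_C B_2\to B_2$, but this projection need not be a fibration, so hypothesis (1) does not apply directly. The remedy is to insert a fibrant replacement for $f$ too: factor $f=r\circ i$ with $i\in\Cof\cap\mathcal{W}$ and $r\in\Fib$, and construct the chain
$$A\times_C B_1\longrightarrow A'\times_C B_1\longrightarrow A'\times_C B_2\longleftarrow A\times_C B_2.$$
The first map is the pull-back of $i\in\mathcal{W}$ along $b\in\Fib$, hence a $\mathcal{W}$-equivalence by right properness of $\mathcal{M}$. The third map is the pull-back of $i\in\mathcal{W}\subseteq\mathcal{W}_{loc}$ along $q\in\Fib$, in $\mathcal{W}_{loc}$ by (1). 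The middle map is the pull-back of $j\in\mathcal{W}_{loc}$ along $A'\times_C B_2\to B_2$, which is now a fibration because it is the pull-back of $r\in\Fib$ along $q$; by (1) it again lies in $\mathcal{W}_{loc}$. Two-out-of-three concludes.

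The main obstacle is precisely the failure of right properness of $\mathcal{M}_{loc}$: the detour through $A'$ is forced in order to turn every comparison square into a pull-back along an $\mathcal{M}$-fibration, where hypothesis (1) can take over. The extension from homotopy pull-backs to all homotopically finite homotopy limits follows formally, since such limits are generated by iterated homotopy pull-backs together with the terminal object, the latter being preserved trivially.
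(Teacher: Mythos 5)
Your argument for $(2)\Rightarrow(1)$ is correct and coincides with the paper's. The problem is in $(1)\Rightarrow(2)$, at the sentence asserting that $P_{loc}:=A\times_C B_2$ computes the homotopy pull-back in $\mathcal{M}_{loc}$ because $q\in\Fib_{loc}$. That assertion is precisely the statement that a strict pull-back along a fibration is a homotopy pull-back, i.e.\@ right properness of $\mathcal{M}_{loc}$ --- which is not available and is essentially what the implication asks you to establish. In a model category that is not right proper, the homotopy pull-back of $A\to C\leftarrow B$ must be computed on a Reedy fibrant replacement of the \emph{whole} cospan: in particular the base $C$ has to be replaced by a $\Fib_{loc}$-fibrant (local) object $\hat C$ and the legs by $\Fib_{loc}$-fibrations over $\hat C$. (Already in the motivating example $C=h_X$ is projectively fibrant but not local, and the discrepancy between pulling back over $h_X$ and over its local replacement is exactly descent.) Your zig-zag
\[
A\times_C B_1\longrightarrow A'\times_C B_1\longrightarrow A'\times_C B_2\longleftarrow A\times_C B_2
\]
is correct as far as it goes, but every term sits over the \emph{same} base $C$; nowhere do you compare a pull-back over $C$ with a pull-back over a local replacement $\hat C$, and that change of base is the actual content of the implication.

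This is where the paper spends its effort: for a point-wise map in $\Fib\cap\mathcal{W}_{loc}$ between $\Fib$-fibrant cospans it factors the comparison as $X\times_Y Z\to X\times_{Y'}Z\to X\times_{Y'}Z'\to X'\times_{Y'}Z'$. The last two maps are pull-backs along fibrations, as in your argument; the first, which changes the base from $Y$ to $Y'$, is exhibited as a pull-back along a fibration of the diagonal $Y\to Y\times_{Y'}Y$, which lies in $\mathcal{W}_{loc}$ because it admits a section that is a pull-back of the map $Y\to Y'\in\Fib\cap\mathcal{W}_{loc}$ (using that, under hypothesis (1), the class $\Fib\cap\mathcal{W}_{loc}$ is closed under arbitrary pull-back, Lemma~\ref{LEMMAP1}). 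Your proof needs this diagonal step, or an equivalent one, inserted before the identification of $A\times_C B_2$ with the $\mathcal{M}_{loc}$-homotopy pull-back can be justified. The closing remark reducing homotopically finite homotopy limits to iterated pull-backs and the terminal object is fine and is the result the paper cites.
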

\begin{proof}[Proof (compare also {\cite[6.2.1.1]{Lur09}})]
$1. \Rightarrow 2.:$ Assume that $\mathcal{W}_{loc}$ is stable under pull-back along morphisms in $\Fib$, i.e.\@ all morphisms in $\mathcal{W}_{loc}$ have property $\mathbf{P}_{fib}$. Then by Lemma~\ref{LEMMAP1}, 1.\@ the class $\Fib \cap \mathcal{W}_{loc}$ is stable under arbitrary pull-back.
Consider a morphism of diagrams
\[ \mu : \vcenter{\xymatrix{ 
& X \ar[d] \\
Z \ar[r] & Y } } \rightarrow  \vcenter{\xymatrix{ 
& X' \ar[d] \\
Z' \ar[r] & Y' } }
 \]
in which in both diagrams both morphisms are in $\Fib$, all objects are $\Fib$-fibrant, and $\mu$  is point-wise in $\mathcal{W}_{loc}$. 
We claim that then the induced morphism between pull-backs is in $\mathcal{W}_{loc}$. 
We may factor $\mu = \mu_2 \mu_1$ where $\mu_1$ is a point-wise trivial cofibration (between diagrams with the same properties) and $\mu_2$ is a point-wise fibration and thus still point-wise in $\mathcal{W}_{loc}$.
Since the statement is clear for $\mu_1$ we may thus assume w.l.o.g.\@ that $\mu$ is a point-wise fibration. 

The morphism between pull-backs might be written as the following composition:
\[ X \times_Y Z \rightarrow X \times_{Y'} Z \rightarrow X \times_{Y'} Z' \rightarrow X' \times_{Y'} Z'.   \]
The last two morphisms are in $\mathcal{W}_{loc}$ because of 1.
The first is the following pullback of the diagonal $Y \rightarrow Y \times_{Y'} Y$:
\[ \xymatrix{ 
X \times_Y Z \ar[r] \ar[d]& Y \ar[d] \\
X \times_{Y'} Z \ar[r] & Y \times_{Y'} Y }  \]
in which the bottom morphism is a composition of pull-backs of morphisms in $\Fib$, hence in $\Fib$. 
Therefore it suffices to see that the diagonal $Y \rightarrow Y \times_{Y'} Y$ is in $\mathcal{W}_{loc}$. But that has a section $Y \times_{Y'} Y \rightarrow  Y$ which is a pullback of 
the morphism $Y \rightarrow Y'$ in $\Fib \cap \mathcal{W}_{loc}$ and thus it is in $\mathcal{W}_{loc}$ itself. This shows that fibrant replacement of diagrams in $\mathcal{M}$
also derives the pull-back functor in the Bousfield localization and hence the localization commutes with homotopy pull-back.  

$2. \Rightarrow 1.:$ Consider a Cartesian square
\[ \xymatrix{ 
X \times_Y Z \ar[r]^-{w'} \ar[d] & X \ar[d]^{f} \\
Z \ar[r]_-w & Y }  \]
in which $f \in \Fib$ and $w \in \mathcal{W}_{loc}$. Since $f$ is a fibration, the pull-back is a homotopy pull-back in $\mathcal{M}$ by right properness of $\mathcal{M}$. By assumption
it is also a homotopy pull-back in the localization. Since $w$ is a weak equivalence in the localization, also $w'$ must be. 
\end{proof}

\begin{SATZ}
A morphism of right derivators with domain homotopically finite diagrams commutes with pullbacks and terminal object if and only if it commutes with all homotopically finite limits. 
\end{SATZ}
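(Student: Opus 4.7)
The forward implication -- that commuting with all homotopically finite limits entails commuting with pullbacks and the terminal object -- is tautological, since both of these are themselves homotopically finite limits. My focus is on the converse. Let $F : \mathbb{D} \to \mathbb{D}'$ be a morphism of right derivators commuting with pullbacks and with the terminal object. The plan is to show that $F$ commutes with $\lim_I$ for every homotopically finite small category $I$ via an inductive cell-attachment argument.

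The key ingredient will be the Beck--Chevalley property of right derivators: a homotopy pushout of small categories $I \simeq I_1 \sqcup^h_{I_0} I_2$ yields a canonical homotopy pullback
\[ \lim\nolimits_I \simeq \lim\nolimits_{I_1} \times^h_{\lim_{I_0}} \lim\nolimits_{I_2}, \]
so that preservation of the limits over $I_0$, $I_1$, $I_2$ together with preservation of pullbacks forces preservation of the limit over $I$. I would then exploit the fact that every homotopically finite category has nerve equivalent to a finite simplicial set, and that any finite simplicial set $K$ admits a filtration $\emptyset = K_0 \subset \cdots \subset K_m = K$ in which each step is a pushout along a boundary inclusion $\partial \Delta^{n_i} \hookrightarrow \Delta^{n_i}$. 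Since $[n]$ has a terminal object, $\lim_{[n]}$ is evaluation at the top vertex and is trivially preserved; and $K_0 = \emptyset$ gives the terminal object, preserved by assumption. An induction on the lexicographic pair (dimension of $K$, number of non-degenerate simplices) then handles $\lim_{\partial \Delta^{n_i}}$ and $\lim_{K_{i-1}}$, and the Beck--Chevalley identity above supplies the inductive step for $K_i$.

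The main obstacle I anticipate is the bookkeeping, specifically the translation between the simplicial cell-attachment picture and the categorical diagrams native to a right derivator. A cleaner variant would be to remain inside the 2-category of small categories throughout, and to present each homotopically finite category directly as an iterated homotopy pushout of the atoms $\emptyset$, $*$ and $[n]$, invoking the standard comparison between nerves and categorical homotopy colimits. Either way, once the inductive framework is in place, the conclusion follows by repeated application of the Beck--Chevalley identity together with the two assumed preservation properties.
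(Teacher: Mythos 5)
Your plan is correct in outline, but it is worth noting that the paper does not prove this statement at all: its ``proof'' is a one-line citation of (the dual of) \cite[Theorem~7.1]{PS14}, and what you have written is essentially a reconstruction of the argument given \emph{in that reference}. The skeleton --- cell-induction on a finite simplicial set presenting the nerve, reduction of $\lim_{\Delta^n}$ to an evaluation, the empty diagram giving the terminal object, and the decomposition of $\lim_K$ as an iterated homotopy pullback --- is exactly the standard route. Three points deserve care before this becomes a proof rather than a plan. First, for \emph{limits} over $[n]$ the answer is evaluation at the \emph{initial} vertex $0$, not the top vertex (that is the colimit case); harmless, but it should be stated correctly, and the real reason it is preserved is that morphisms of derivators commute with restriction functors by definition. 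Second, the ``Beck--Chevalley'' decomposition $\lim_I \simeq \lim_{I_1}\times^h_{\lim_{I_0}}\lim_{I_2}$ is not a formal property of arbitrary homotopy pushouts of small categories: it is the actual technical content of \cite[Theorem~7.1]{PS14}, and it requires verifying homotopy exactness of the relevant square of simplex categories for a pushout of simplicial sets along the monomorphism $\partial\Delta^n\hookrightarrow\Delta^n$; calling this ``bookkeeping'' undersells where the work lies. Third, the passage from a homotopically finite category $A$ to a finite simplicial set must go through an honest equivalence of categories onto a category whose nerve literally has finitely many nondegenerate simplices, followed by the comparison $\lim_A \simeq \operatorname{holim}_{N(A)}$ via a homotopy initial functor from the (opposite) category of simplices --- a mere weak equivalence of nerves would not suffice, since Thomason weak equivalences of index categories do not preserve derivator limits. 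With those caveats addressed, your argument goes through and buys an actual proof where the paper only offers a pointer.
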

\begin{proof}
This is (the dual of) \cite[Theorem~7.1]{PS14}.
\end{proof}

\section{Exact localizations of simplicial presheaf categories}

\begin{PAR}In their article \cite{DHI04} (cf.\@ in particular \cite[Proposition 5.1]{DHI04}) the authors investigate --- as a byproduct of their proofs --- to which extent weak equivalences in a left Bousfield localization of a model category of simplicial presheaves (the localization at all hypercovers) can
be characterized by local lifting properties along hypercovers. In the appendix we subsumed several general facts about the abstract notion of ``local lifting''. 
In this section and the next we show that weak equivalences in a left Bousfield localization of simplicial presheaves can be described by such a local lifting property precisely if the localization is exact. 
This works therefore also for the \v{C}ech local model structure. 
However, it is more of theoretical interest because the abstract lifting property is quite self-referential. 
\end{PAR}

\begin{PAR}
In this section we consider a model category structure $(\mathcal{SET}^{\mathcal{S}^{\op} \times \Delta^{\op}}, \Cof, \Fib, \mathcal{W})$ where $\mathcal{W}$ is the class of section-wise weak equivalences, and a left Bousfield localization $(\mathcal{SET}^{\mathcal{S}^{\op} \times \Delta^{\op}}, \Cof, \Fib_{loc}, \mathcal{W}_{loc})$ thereof.
{\em We assume that the first structure is simplicial, left and right proper, and cofibrantly generated, and that the class of cofibrations is contained in the class of monomorphisms. }
For instance, this holds for the projective or for the injective model category structure. 
\end{PAR}

\begin{PAR}
Recall that the injective structure is characterized by the fact that the cofibrations are the monomorphisms. Thus fibrations are those morphisms that have the right lifting property w.r.t.\@ all monomorphisms that are also section-wise weak equivalences. 
In the projective structure the fibrations are the section-wise surjective morphisms and the cofibrations are those morphisms $X \rightarrow Y$ for which the morphism\footnote{where $L_n$ denotes the $n$-th latching object w.r.t.\@ the Reedy structure on $\Delta^{\op}$}
\[ L_n Y \amalg_{L_n X} X_n \rightarrow Y_n \]
is of the form $A \rightarrow A \amalg \coprod B_i$, where the $B_i$ are retracts of representables. If $\mathcal{S}$ is idempotent complete (for example if it has fiber products) 
then the $B_i$ are representable themselves. There is an even more concrete description of the cofibrant objects (cf.\@ e.g.\@ \cite[Proposition 4.9]{Hor21}). In particular those are
degree-wise coproducts of retracts of representables.  
\end{PAR}

\begin{PAR}
 {\em We assume that also the Bousfield localization is cofibrantly generated or, equivalently, that it is a left  Bousfield localization generated by a set $S$ of cofibrations as in Theorem~\cite[A.3.7.3.]{Lur09}.} 
It follows that the set of weak equivalences $\mathcal{W}_{loc}$ is also part of a left Bousfield localization of the injective structure and we will sometimes use this fact. 
\end{PAR}

\begin{PAR}
The notions ``fibration'' and ``trivial cofibration'' will mean the corresponding notion for the global model structure. 
Because of the assumption on the existence of a subset $S \subset \Cof \cap \mathcal{W}_{loc}$ of generating cofibrations for the left Bousfield localization
 every trivial cofibration in the localization is a retract of a transfinite composition of trivial cofibrations and of push-outs of morphisms of the form
$(\partial \Delta_n \rightarrow \Delta_n) \boxplus f$, where $f$ is a cofibration in $S$. 
\end{PAR}


\begin{BEISPIEL}\label{EX1}
Fix a Grothendieck pre-topology on $\mathcal{S}$ and
let $S$ be the set of (cofibration replacements of) the \v{C}ech covers. Those arise from a covering $\{U_i \rightarrow X\}$ of an object $X \in \mathcal{S}$ and are morphisms $U \rightarrow h_X$ where $U$ is the simplicial presheaf defined by
\[  U_n := \underbrace{ (\coprod_i h_{U_i}) \times_{h_X} \cdots  \times_{h_X} (\coprod_i h_{U_i}) }_{n-\text{times}}. \]
\end{BEISPIEL}

\begin{BEISPIEL}\label{EX2}
Fix a Grothendieck topology on $\mathcal{S}$ and let $S$ be the class of (cofibration replacements of) hypercovers. Hypercovers are morphisms of simplicial presheaves 
\[ Y \rightarrow h_X \]
in which $X \in \mathcal{S}$ and $Y$ is degree-wise a coproduct of representables such that the morphism
\[ Y_n \rightarrow \Hom(\partial \Delta_n, Y) \times_{\Hom(\partial \Delta_n, h_X)} h_X  \]
is a local epimorphism for any $n$.
The class $S$ might not be a set. In \cite[6.5]{DHI04} it is shown that one replace $S$ by a {\em dense} set $S' \subset S$ of hypercovers without changing the localization. In particular
 {\em all} hypercovers are still weak equivalences in the localization. 
\end{BEISPIEL}

\begin{BEM}The above definition of hypercover is a special case of the following more general definition. A (generalized) hypercover between two simplicial presheaves is
a morphism
\[ Y \rightarrow X \]
such that 
\[ Y_n \rightarrow \Hom(\partial \Delta_n, Y) \times_{\Hom(\partial \Delta_n, X)} X  \]
is a {\em local epimorphism} for any $n$. For a general morphism $A \rightarrow B$ of presheaves this means that for any $X \in \mathcal{S}$ and section $h_X \rightarrow B$ 
there exists a covering $\{U_i \rightarrow X\}$ and an extension to a commutative square as follows:
\[ \xymatrix{ \coprod_{i} h_{U_i}  \ar@{.>}[d] \ar@{.>}[r] & A \ar[d] \\ 
h_X \ar[r] & B 
} \]
\end{BEM}
We have the following formal property: 
\begin{LEMMA}\label{LEMMAHCPULLBACK}
 Generalized hypercovers are closed under pull-back.
\end{LEMMA}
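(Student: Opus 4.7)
The plan is to exploit two facts: (1) the cotensor $\Hom(\partial \Delta_n, -)$ is a right adjoint (to $\partial \Delta_n \otimes (-)$) and hence preserves pull-backs; (2) local epimorphisms are stable under pull-back. Given a generalized hypercover $p\colon Y \to X$ and an arbitrary morphism $X' \to X$, set $Y' := Y \times_X X'$ with projection $p'\colon Y' \to X'$. We must show that the map
\[ Y'_n \to \Hom(\partial \Delta_n, Y') \times_{\Hom(\partial \Delta_n, X')} X' \]
is a local epimorphism for every $n$.

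First I would compute the matching object of $p'$. By preservation of limits,
\[ \Hom(\partial \Delta_n, Y') \cong \Hom(\partial \Delta_n, Y) \times_{\Hom(\partial \Delta_n, X)} \Hom(\partial \Delta_n, X'). \]
Combined with pasting of pull-backs, this gives
\[ \Hom(\partial \Delta_n, Y') \times_{\Hom(\partial \Delta_n, X')} X' \cong \Hom(\partial \Delta_n, Y) \times_{\Hom(\partial \Delta_n, X)} X' \cong \bigl( \Hom(\partial \Delta_n, Y) \times_{\Hom(\partial \Delta_n, X)} X \bigr) \times_X X'. \]
Since limits in simplicial presheaves are computed level-wise we also have $Y'_n = Y_n \times_{X_n} X'_n$, and the natural matching map for $p'$ at level $n$ is therefore identified with the pull-back along $X' \to X$ of the corresponding map for $p$, which is a local epimorphism by hypothesis.

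It thus remains to observe that local epimorphisms are stable under pull-back. This is immediate from the characterization in the preceding remark: given a pull-back square of presheaves $A \times_B B' \to B'$ with $A \to B$ a local epimorphism, any section $h_X \to B'$ composes to a section of $B$, which admits a covering $\{U_i \to X\}$ and a lift to $A$; the pair consisting of this lift and the given section of $B'$ produces a lift to $A \times_B B'$ over the same covering.

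The argument is essentially formal, so the only mild obstacle is keeping the notation clean — in particular separating the dual role of $X$ as a simplicial presheaf and of $X_n$ as its level-$n$ presheaf in the matching formula, and applying pull-back pasting carefully. No further model-categorical input is needed.
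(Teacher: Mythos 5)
Your argument is correct and follows essentially the same route as the paper: both reduce the claim to the facts that the cotensor $\Hom(\partial\Delta_n,-)$ preserves pull-backs, that pull-back squares paste, and that local epimorphisms are stable under pull-back (the paper packages the first two steps into a single cube of Cartesian squares, while you write out the isomorphism chain and additionally spell out the stability of local epimorphisms, which the paper leaves implicit). No gap.
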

\begin{proof}
By definition a generalized hypercover $X \rightarrow Y$ has the property that 
\[ X_n \rightarrow Y_n \times_{\Hom(\partial \Delta_n, Y)} \Hom(\partial \Delta_n, X)  \]
are local epimorphisms. For a pull-back diagram
\[ \xymatrix{
X' \ar[r]^{}  \ar[d]_{f'} & X \ar[d]^f \\
Y' \ar[r]^{} & Y
}\]
consider the diagram 
\[ \xymatrix{
X'_n  \ar[d]\ar[rr] &&  X_n \ar[d] \\
\Box \ar[rd] \ar[rr]^{}  \ar[dd] & &\Box\ar[rd] \ar[dd] \\
& \Hom(\partial \Delta_n, X') \ar[rr]^{}  \ar[dd] & & \Hom(\partial \Delta_n, X) \ar[dd] \\
Y'_n \ar[rd] \ar[rr]^{} & & Y_n \ar[rd] \\
& \Hom(\partial \Delta_n, Y') \ar[rr]^{} & & \Hom(\partial \Delta_n, Y) 
}\]
in which the front and back (total) square are Cartesian and also (by definition) the right and left squares. It follows that the back lower square is Cartesian and 
hence also the back upper square is Cartesian. Therefore, since the upper right vertical morphism is a local epimorphism so is the left. 
\end{proof}

The goal of this section and the next is to establish several equivalent conditions for a given localization $\mathcal{SET}^{\mathcal{S}^{\op} \times \Delta^{\op}}_{loc}$ to be exact in the sense of Theorem~\ref{SATZPROPEREXACTLOC}. The first is: 


\begin{PROP}\label{PROPEXACT}
If each $f \in S$ has property $\mathbf{P}_{fib}$ then the whole class $\mathcal{W}_{loc}$ has property $\mathbf{P}_{fib}$. 
\end{PROP}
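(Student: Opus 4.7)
The plan is to introduce $\mathbf{T} := \{f : f \text{ has property } \mathbf{P}_{fib}\}$ and show $\mathcal{W}_{loc} \subset \mathbf{T}$. First I observe that $\mathbf{T}$ is closed under composition: stacking two pullbacks of $f = h g$ along a fibration puts both halves in $\mathcal{W}_{loc}$ by hypothesis on $g$ and $h$, and 2-out-of-3 does the rest. Since local trivial fibrations coincide with global trivial fibrations, every $w \in \mathcal{W}_{loc}$ factors in the localized structure as $w = p \circ i$ with $p \in \Fib \cap \mathcal{W} \subset \mathbf{T}$ and $i$ a local trivial cofibration, so it suffices to show every local trivial cofibration lies in $\mathbf{T}$.

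Next I would establish the closure properties of $\mathbf{T}$ required for the generation theorem recalled in the paper: $\mathbf{T}$ is closed under retracts, under pushouts of cofibrations in $\mathbf{T}$ along arbitrary maps, and under transfinite compositions of cofibrations in $\mathbf{T}$. The ingredients are: $\mathcal{SET}^{\mathcal{S}^{\op}\times\Delta^{\op}}$ is a topos, so pullback along any fibration preserves arbitrary colimits; cofibrations are monomorphisms, so their pullbacks are still monomorphisms; and a monomorphism in $\mathcal{W}_{loc}$ is a trivial cofibration of the Bousfield-localized injective structure (which shares $\mathcal{W}_{loc}$, as noted). Retract closure requires the small verification that, given $g$ a retract of $f$ with codomain retract $i_B : B \to B'$, $r_B : B' \to B$ and a fibration $q : Z \to B$, the pullback $q^* g$ is a retract of the pullback of $f$ along the fibration $Z \times_B B' \to B'$, which is in $\mathcal{W}_{loc}$ by $f \in \mathbf{T}$.

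By the paper's description of generators of local trivial cofibrations, it then remains to prove $(\partial \Delta_n \to \Delta_n) \boxplus s \in \mathbf{T}$ for every $n \geq 0$ and $s \in S$. I would do this by induction on $n$: the case $n=0$ unwinds to $s \in \mathbf{T}$, which is the hypothesis. For the inductive step, pick a horn inclusion $\Lambda^n_k \subset \partial \Delta_n \subset \Delta_n$. The outer inclusion $\Lambda^n_k \to \Delta_n$ is anodyne, so $(\Lambda^n_k \to \Delta_n) \boxplus s$ is a global trivial cofibration by SM7 and hence in $\mathbf{T}$. The inner inclusion $\Lambda^n_k \hookrightarrow \partial \Delta_n$ is a pushout of $(\partial \Delta_{n-1} \to \Delta_{n-1})$ attaching the $k$-th face along its boundary, so by pushout-preservation of $(-) \boxplus s$ in the first variable the morphism $(\Lambda^n_k \to \partial \Delta_n) \boxplus s$ is a pushout of $(\partial \Delta_{n-1} \to \Delta_{n-1}) \boxplus s$, placing it in $\mathbf{T}$ by the inductive hypothesis and pushout closure. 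The general identity $(K \to M) \boxplus s = \bigl((L \to M) \boxplus s\bigr) \circ g$ for a composition $K \to L \to M$, with $g$ a pushout of $(K \to L) \boxplus s$, applied to $\Lambda^n_k \subset \partial \Delta_n \subset \Delta_n$, exhibits the $\mathbf{T}$-morphism $(\Lambda^n_k \to \Delta_n) \boxplus s$ as $(\partial \Delta_n \to \Delta_n) \boxplus s$ postcomposed with a $\mathbf{T}$-morphism. Lemma~\ref{LEMMAP1}(2) then forces $(\partial \Delta_n \to \Delta_n) \boxplus s \in \mathbf{T}$, closing the induction. The main obstacle is this inductive horn decomposition step; the rest is largely formal, leveraging the topos structure and the availability of the injective localization to convert cofibrations into monomorphisms.
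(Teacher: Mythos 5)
Your proposal is correct, and its overall skeleton matches the paper's: factor a local weak equivalence into a cofibration in $\mathcal{W}_{loc}$ followed by a trivial fibration, invoke the cellular description of local trivial cofibrations as retracts of transfinite compositions of pushouts of global trivial cofibrations and of the maps $(\partial \Delta_n \to \Delta_n) \boxplus s$, and establish closure of property $\mathbf{P}_{fib}$ under retracts, pushouts along monomorphisms and transfinite compositions exactly as in Lemma~\ref{LEMMAP2} (your retract verification and the use of the localized injective structure for the colimit step are precisely the paper's arguments, made slightly more explicit). Where you genuinely diverge is the key step, the analogue of Lemma~\ref{LEMMABOXPLUSPFIB}. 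The paper splits $(\partial \Delta_n \to \Delta_n) \boxplus f$ through its defining pushout, reduces to showing that $K \otimes f$ has $\mathbf{P}_{fib}$ for a finite simplicial set $K$, and proves that directly from the identification of $K \otimes X \to K \otimes Y$ as a base change of $f$, after factoring $f$ into a trivial cofibration followed by a fibration so that Lemma~\ref{LEMMAP1}, 1.\@ upgrades $\mathbf{P}_{fib}$ to $\mathbf{P}$. You instead induct on $n$ along the filtration $\Lambda_{n,k} \subset \partial \Delta_n \subset \Delta_n$: SM7 places $(\Lambda_{n,k} \to \Delta_n) \boxplus s$ in $\mathcal{W}$, hence in $\mathbf{P}_{fib}$ by right properness; the face-attachment pushout $\partial \Delta_n = \Lambda_{n,k} \cup_{\partial \Delta_{n-1}} \Delta_{n-1}$ together with the composite decomposition of pushout-products exhibits $(\Lambda_{n,k} \to \Delta_n) \boxplus s$ as $(\partial \Delta_n \to \Delta_n) \boxplus s$ precomposed with a pushout of $(\partial \Delta_{n-1} \to \Delta_{n-1}) \boxplus s$; and Lemma~\ref{LEMMAP1}, 2.\@ closes the induction. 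Your route exploits the combinatorics of simplices and requires $s$ to be a cofibration (so that the pushout-products are monomorphisms, as the pushout-closure lemma demands), which holds for the generating set $S$; the paper's lemma is a single non-inductive argument valid for an arbitrary morphism with $\mathbf{P}_{fib}$. Both yield complete proofs of the proposition.
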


By Theorem~\ref{SATZPROPEREXACTLOC} this is equivalent to exactness and hence will hold for (the localization of) any right proper model category structure with the same weak equivalences. 

We need a couple of lemmas: 

\begin{LEMMA}\label{LEMMAP2}
\begin{enumerate}
\item If $I$ is an ordinal and $F: I \rightarrow \mathcal{C}$ a functor mapping each morphism (i.e.\@ relation) to a cofibration with property $\mathbf{P}_{fib}$ then also the morphism
$F(0) \rightarrow \colim F$ (the transfinite composition) has property $\mathbf{P}_{fib}$.
\item Consider a push-out diagram
\[ \xymatrix{  
X \ar[d]_f \ar[r]^g & X' \ar[d]^F \\
Y \ar[r]_G & Y'  }\]
in which $f$ or $g$ is a monomorphism. If $f$ has property $\mathbf{P}_{fib}$  also $F$ has property $\mathbf{P}_{fib}$. 
\item If $f$ is a retract of $g$ and $g$ has property $\mathbf{P}_{fib}$ then $f$ has property $\mathbf{P}_{fib}$.
\end{enumerate}
\begin{proof}
1. Consider a morphism $X \rightarrow \colim f$ and the corresponding constant diagrams $(X), (\colim f): I \rightarrow \mathcal{C}$. Consider the pull-back
\[ (X) \times_{(\colim F)} F. \]
It is still a diagram of monomorphisms in $\mathcal{W}_{loc}$. Hence
\[ X \times_{\colim F} F(0) \rightarrow \colim ((X) \times_{(\colim F)} F)  \]
is in $\mathcal{W}_{loc}$ (using the injective structure in which the monomorphisms are the cofibrations).
Since pull-back commutes with filtered colimits this morphism is the same as 
\[ X \times_{\colim F} F(0) \rightarrow X. \]
The transfinite composition has thus property $\mathbf{P}_{fib}$.

2. We may argue as in 1.\@ using that push-outs along monomorphisms commutes with fiber products.

4. We may argue as in 1.\@ using that fiber products commute with retracts.  
\end{proof}

\end{LEMMA}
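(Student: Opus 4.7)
The overall strategy for all three parts is to pull back the data along the testing fibration and then exploit compatibility of pullbacks with colimits, pushouts and retracts, together with the fact --- noted earlier in the paper --- that $\mathcal{W}_{loc}$ is also the class of weak equivalences for a left Bousfield localization of the injective model structure, whose cofibrations are precisely the monomorphisms and which is still left proper.

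For part 1, I plan to test $\mathbf{P}_{fib}$ of $F(0) \to \colim F$ against a fibration $p: W \to \colim F$ by pulling back the whole $I$-diagram $F$ along $p$. Each transition $W \times_{\colim F} F(i) \to W \times_{\colim F} F(i+1)$ is a pullback of the cofibration $F(i) \to F(i+1)$ along the fibration $W \times_{\colim F} F(i+1) \to F(i+1)$ (itself a pullback of $p$). By $\mathbf{P}_{fib}$ of each $F(i) \to F(i+1)$, these transitions lie in $\mathcal{W}_{loc}$ and remain monomorphisms. Since filtered colimits commute with pullbacks in a presheaf category, the transfinite composition of the pulled-back diagram is $W \times_{\colim F} F(0) \to W$, which in the injective Bousfield localization is a transfinite composition of trivial cofibrations, hence a trivial cofibration.

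For part 2, given the pushout square with $f$ having $\mathbf{P}_{fib}$ and either $f$ or $g$ a monomorphism, I would test $\mathbf{P}_{fib}$ of $F$ against a fibration $Z \to Y'$ by pulling the full pushout square back along it. Since pushouts along monomorphisms are stable under pullback, the result is still a pushout square. The pulled-back copy of $f$ is the pullback of $f$ along the fibration $Z \times_{Y'} Y \to Y$ (itself a pullback of $Z \to Y'$), hence lies in $\mathcal{W}_{loc}$. If $g$ is the monomorphism, its pullback is a cofibration in the injective structure, and left properness of the injective Bousfield localization gives the pullback of $F$ in $\mathcal{W}_{loc}$. If instead $f$ is the monomorphism, its pullback is a trivial cofibration in the injective structure, which pushes out to a trivial cofibration.

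For part 3, the guiding idea is that pullbacks send retracts in the arrow category to retracts, and $\mathcal{W}_{loc}$ is closed under retracts. The subtlety, and what I anticipate to be the main obstacle, is that if $f$ is a retract of $g$ via $(\iota_A, \iota_B, r_A, r_B)$ and $p: Z \to B$ is a fibration, then the naive map $Z \to B'$ given by $\iota_B \circ p$ is not a fibration, so $\mathbf{P}_{fib}$ of $g$ does not apply directly. The workaround I plan to use is to form $Z' := Z \times_B B'$, pulled back along $r_B: B' \to B$: the projection $Z' \to B'$ is then a pullback of the fibration $p$, and $Z$ sits inside $Z'$ via the section $z \mapsto (z, \iota_B(p(z)))$. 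The pullback $g_{Z'} : Z' \times_{B'} A' \to Z'$ lies in $\mathcal{W}_{loc}$ by $\mathbf{P}_{fib}$ of $g$, and a direct check using $r_A \iota_A = \id_A$ and $r_B \iota_B = \id_B$ shows that $f_Z : Z \times_B A \to Z$ is a retract of $g_{Z'}$ in the arrow category. Closure of $\mathcal{W}_{loc}$ under retracts concludes the argument.
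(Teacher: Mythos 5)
Your proof is correct and follows essentially the same route as the paper's: pull the relevant diagram back along the test fibration, use that pullbacks in a presheaf category commute with transfinite compositions, pushouts and retracts, and conclude via the injective Bousfield localization (closure of its trivial cofibrations under transfinite composition and pushout, and its left properness). You merely make explicit what the paper leaves implicit, in particular the retract construction over $Z' = Z \times_B B'$ in part 3, which is exactly what the paper's phrase ``fiber products commute with retracts'' is hiding.
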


\begin{LEMMA}\label{LEMMABOXPLUSPFIB}
If $f: X \rightarrow Y$ has property $\mathbf{P}_{fib}$ then also 
\[  (\partial \Delta_n \rightarrow \Delta_n) \boxplus f \] 
has property $\mathbf{P}_{fib}$. 
\end{LEMMA}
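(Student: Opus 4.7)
The plan is to reduce the problem to the case where $f$ is a fibration with the stronger property $\mathbf{P}$ (pullbacks along arbitrary maps, not merely along fibrations, lie in $\mathcal{W}_{loc}$), and then to pull back $i \boxplus f$ (writing $i := (\partial \Delta_n \to \Delta_n)$) along an arbitrary fibration and analyze the result directly. The central observation powering the second step is that for any simplicial set $K$, the sectionwise identity $(K \otimes X)(s) = K \times X(s) = (K \times Y(s)) \times_{Y(s)} X(s)$ exhibits $K \otimes f$ as a pullback of $f$ along the projection $K \otimes Y \to Y$, so that property $\mathbf{P}$ is stable under the simplicial tensor.

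For the reduction, factor $f = pc$ with $c \in \Cof \cap \mathcal{W}$ and $p \in \Fib$; by Lemma~\ref{LEMMAP1}, 3., $p$ has property $\mathbf{P}$. Set
\begin{align*}
A &:= (\partial \Delta_n \otimes Y) \oplus_{\partial \Delta_n \otimes X} (\Delta_n \otimes X), \\
C &:= (\partial \Delta_n \otimes Y) \oplus_{\partial \Delta_n \otimes X'} (\Delta_n \otimes X'), \\
B &:= (\partial \Delta_n \otimes X') \oplus_{\partial \Delta_n \otimes X} (\Delta_n \otimes X),
\end{align*}
so that $A$ and $C$ are the sources of $i \boxplus f$ and $i \boxplus p$ respectively. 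A direct universal-property calculation (pushout-interchange, using $\partial \Delta_n \otimes f = (\partial \Delta_n \otimes p)(\partial \Delta_n \otimes c)$) yields a pushout square
\[ \xymatrix{
B \ar[r] \ar[d]_{i \boxplus c} & A \ar[d] \\
\Delta_n \otimes X' \ar[r] & C
} \]
with $B \to A$ induced by $p$. Since $i \boxplus c$ is a trivial cofibration by the simplicial structure (hence a monomorphism in $\mathcal{W} \subseteq \mathbf{P}_{fib}$), Lemma~\ref{LEMMAP2}, 2.\@ gives that $A \to C$ has $\mathbf{P}_{fib}$. As $i \boxplus f = (i \boxplus p) \circ (A \to C)$, Lemma~\ref{LEMMAP1}, 2.\@ reduces the claim to showing $i \boxplus p$ has $\mathbf{P}_{fib}$. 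Renaming $p$ as $f$, we may therefore assume $f \in \Fib$ has property $\mathbf{P}$.

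By the central observation, $\Delta_n \otimes f$ and $\partial \Delta_n \otimes f$ are pullbacks of $f$ and hence both have $\mathbf{P}$. Pick any fibration $Z \to \Delta_n \otimes Y$; since pullback preserves colimits in the simplicial presheaf topos (locally cartesian closed), the pullback of $A$ equals $Z_1 \oplus_{Z_{12}} Z_2$, where $Z_1, Z_2, Z_{12}$ are the pullbacks along $Z$ of $\partial \Delta_n \otimes Y$, $\Delta_n \otimes X$, $\partial \Delta_n \otimes X$. Then $Z_2 \to Z$ is the pullback of $\Delta_n \otimes f$ along a fibration, hence lies in $\mathbf{P}_{fib}$; $Z_{12} \to Z_1$ is the pullback of $\partial \Delta_n \otimes f$ along the fibration $Z_1 \to \partial \Delta_n \otimes Y$ (itself a pullback of the given one), hence lies in $\mathbf{P}_{fib}$; and $Z_{12} \to Z_2$ is the pullback of the monomorphism $\partial \Delta_n \otimes X \to \Delta_n \otimes X$, hence a monomorphism. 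Applying Lemma~\ref{LEMMAP2}, 2.\@ to the pushout of the $\mathbf{P}_{fib}$-morphism $Z_{12} \to Z_1$ along the monomorphism $Z_{12} \to Z_2$ yields $Z_2 \to Z_1 \oplus_{Z_{12}} Z_2 \in \mathbf{P}_{fib}$. Combined with $Z_2 \to Z \in \mathbf{P}_{fib}$, Lemma~\ref{LEMMAP1}, 2.\@ gives $Z_1 \oplus_{Z_{12}} Z_2 \to Z \in \mathcal{W}_{loc}$, as required.

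The main conceptual obstacle is that the direct analysis in the second stage requires $\Delta_n \otimes f$ and $\partial \Delta_n \otimes f$ to be good for \emph{arbitrary} pullbacks, because the projection $K \otimes Y \to Y$ is not a fibration in general; this is exactly why $\mathbf{P}_{fib}$ alone is insufficient and one must first replace $f$ by the fibration $p$ with the stronger property $\mathbf{P}$, via the pushout-interchange square above.
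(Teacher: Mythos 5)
Your proof is correct, and it uses the same basic ingredients as the paper's (the factorization $f=pc$ together with Lemma~\ref{LEMMAP1}, 3.\@ to obtain a fibration with property $\mathbf{P}$; the observation that $K\otimes f$ is the pullback of $f$ along the projection $K\otimes Y\to Y$; and the closure properties of Lemmas~\ref{LEMMAP1}, 2.\@ and \ref{LEMMAP2}, 2.), but it assembles them in a genuinely different order. The paper first strips off the pushout--product: it factors $(\partial\Delta_n\to\Delta_n)\boxplus f$ through the corner object, uses Lemma~\ref{LEMMAP2}, 2.\@ on the pushout along the monomorphism $\partial\Delta_n\otimes X\to\Delta_n\otimes X$, and thereby reduces everything to showing that $\partial\Delta_n\otimes f$ and $\Delta_n\otimes f$ have $\mathbf{P}_{fib}$; only then does it factor $f$ and handle an arbitrary fibration $Z\to K\otimes Y$ by a three-story pullback diagram, using right properness for the $\Cof\cap\mathcal{W}$ part and property $\mathbf{P}$ for the fibration part. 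You instead first strip off the trivial cofibration, via the pushout-interchange identity $C\cong A\oplus_B(\Delta_n\otimes X')$ (which is correct --- I checked the universal property), reducing to $(\partial\Delta_n\to\Delta_n)\boxplus p$ with $p\in\Fib$ having property $\mathbf{P}$, and then verify $\mathbf{P}_{fib}$ for this map by an explicit computation of its pullback along a fibration, invoking local cartesian closedness of the presheaf topos to commute the pullback past the defining pushout. Your route buys a cleaner second stage (everything in sight has full property $\mathbf{P}$, so no care is needed about which legs are fibrations), at the cost of the extra input that pullback preserves pushouts --- true here, but a fact the paper's argument never needs; the paper's route keeps the argument purely within the model-categorical closure properties but pays for it with the more delicate three-story diagram. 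Two cosmetic remarks: writing $\mathcal{W}\subseteq\mathbf{P}_{fib}$ treats $\mathbf{P}_{fib}$ as a class, which is harmless given the paper's remark that $f\in\mathcal{W}$ implies $f$ has $\mathbf{P}_{fib}$ by right properness; and in your final step Lemma~\ref{LEMMAP1}, 2.\@ actually yields that $Z_1\oplus_{Z_{12}}Z_2\to Z$ has $\mathbf{P}_{fib}$, which is more than the membership in $\mathcal{W}_{loc}$ you state, but of course suffices.
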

\begin{proof}
Consider the  diagram
\[ \xymatrix{
\partial \Delta_n \otimes X \ar[r] \ar[d] & \Delta_n \otimes X \ar[d] \\
\partial \Delta_n \otimes Y \ar[r] & \lefthalfcap \ar[d] \\
& \Delta_n \otimes Y 
}\]
Since $\partial \Delta_n \otimes X \rightarrow \Delta_n \otimes X$ is a monomorphism, by Lemma~\ref{LEMMAP1}, 2.\@ and Lemma~\ref{LEMMAP2}, 2.\@ it suffices to show that  $\partial \Delta_n \otimes X \rightarrow \partial \Delta_n \otimes Y$ and $\Delta_n \otimes X \rightarrow \Delta_n \otimes Y$ have property $\mathbf{P}_{fib}$.
Factor $f : X \rightarrow Y$ as 
\[ \xymatrix{  X \ar[r]^-g & X' \ar[r]^-h & Y } \] 
with $g \in \Cof \cap \mathcal{W}$ and $h \in \Fib$. By 2-out-of-3 we have $h \in \mathcal{W}_{loc}$. 
In the following diagram all squares are Cartesian and $K$ is an arbitary simplicial set:
\[ \xymatrix{
 \Box \ar[r] \ar[d]_{\mathcal{W}} &  K \otimes X \ar[r] \ar[d]^{\mathcal{W}} & X \ar[d]^{g \in \Cof \cap \mathcal{W}} \\
 \Box \ar[r]^-{\Fib} \ar[d]_{\mathcal{W}_{loc}} &  K \otimes X' \ar[r] \ar[d] & X' \ar[d]^{h \in \Fib \cap \mathcal{W}_{loc}} \\
Z \ar[r]_-{\Fib} & K \otimes Y \ar[r] & Y
}\]
The lower left vertical morphism is in $\mathcal{W}_{loc}$ because by Lemma~\ref{LEMMAP1}, 1.\@ $h: X' \rightarrow Y$ has property $\mathbf{P}$ and the upper left vertical morphism is in $\mathcal{W}_{loc}$ because of right properness and the fact that the middle top vertical morphism is in $\mathcal{W}$.
\end{proof}



\begin{LEMMA}\label{LEMMAPHOM}
Consider two morphisms $f, g: X \rightarrow Y$ which are left homotopic. 
Then $f$ has property $\mathbf{P}_{fib}$ if and only if $g$ has property $\mathbf{P}_{fib}$.
\end{LEMMA}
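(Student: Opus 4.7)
The plan is to leverage Lemma~\ref{LEMMAP1}, part 2, by factoring both $f$ and $g$ through a common cylinder object and observing that the endpoint inclusions automatically have property $\mathbf{P}_{fib}$.

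First I would choose, by the definition of left homotopy, a cylinder object
\[ X \amalg X \xrightarrow{i_0 \sqcup i_1} \mathrm{Cyl}(X) \xrightarrow{q} X \]
factoring the fold map as a cofibration followed by a weak equivalence, together with a homotopy $H: \mathrm{Cyl}(X) \to Y$ satisfying $H \circ i_0 = f$ and $H \circ i_1 = g$. The crucial (and essentially free) observation is that both $i_0$ and $i_1$ are in $\mathcal{W}$: since $q \circ (i_0 \sqcup i_1)$ is the fold map, we have $q \circ i_k = \mathrm{id}_X$ for $k = 0,1$, and then 2-out-of-3 applied to $q \in \mathcal{W}$ yields $i_k \in \mathcal{W}$. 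Note that no cofibrancy hypothesis on $X$ is needed for this step. By the remark immediately following Definition~\ref{DEFP}, which uses right properness of the underlying model structure, any morphism in $\mathcal{W}$ has property $\mathbf{P}_{fib}$; in particular $i_0$ and $i_1$ do.

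Next I would apply Lemma~\ref{LEMMAP1}, part 2, twice. To the factorization $f = H \circ i_0$, using that $i_0$ has $\mathbf{P}_{fib}$, it gives the equivalence ``$f$ has $\mathbf{P}_{fib}$ iff $H$ has $\mathbf{P}_{fib}$''. To the factorization $g = H \circ i_1$ it gives analogously ``$g$ has $\mathbf{P}_{fib}$ iff $H$ has $\mathbf{P}_{fib}$''. Chaining the two equivalences produces the desired conclusion.

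The argument is quite routine, and I do not foresee any real obstacle. The only point worth flagging is the need to see $i_0, i_1 \in \mathcal{W}$ in full generality, which is handled cleanly by 2-out-of-3 and requires no cofibrancy hypothesis on the source $X$; everything else is a direct invocation of the previously established lemma.
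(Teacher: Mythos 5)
Your proof is correct and follows essentially the same route as the paper: both arguments reduce to the fact that the cylinder's endpoint inclusions are weak equivalences (hence have $\mathbf{P}_{fib}$ by right properness) and then transfer $\mathbf{P}_{fib}$ across the homotopy via the two-out-of-three behaviour of $\mathbf{P}_{fib}$ under composition. The only (harmless) differences are that you use an abstract cylinder object and cite Lemma~\ref{LEMMAP1}, 2.\@ explicitly, whereas the paper works with the simplicial cylinder $\Delta_1 \otimes X$ (invoking the injective structure to see $e_i \in \mathcal{W}$) and unrolls the same pullback argument by hand.
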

\begin{proof}Let $\mu: \Delta_1 \otimes X \rightarrow Y$ be the homotopy between $f$ and $g$. 
Consider for $i \in \{0,1\}$ the pull-back diagram along a fibration:
\[ \xymatrix{
Z_i  \ar[r] \ar[d]_{\mathcal{W}} & \{ i \}\otimes X \ar[d]^{e_i \in \mathcal{W}} \ar[d]  \\
Z' \ar[r] \ar[d] & \Delta_1 \otimes X \ar[d]^\mu \\
Z \ar[r]_-{\Fib} & Y
}\]
Note that $e_i: \{i\} \otimes X \rightarrow \Delta_1 \otimes X$ is in $\mathcal{W}$ (existence of the injective structure which is simplicial and in which $X$ is cofibrant). 
Hence the upper left vertical morphism is in $\mathcal{W}$ because of right properness. 
Assuming $Z_0 \rightarrow Z$ is in $\mathcal{W}_{loc}$,
the first pull-back shows therefore that $Z' \rightarrow Z$ is in $\mathcal{W}_{loc}$ and so is $Z_1 \rightarrow Z$.
\end{proof}

\begin{proof}[{Proof of Proposition~\ref{PROPEXACT}}]
Let $f \in \mathcal{W}_{loc}$. It is the composition of a cofibration in $\mathcal{W}_{loc}$ and a trivial fibration. Since trivial fibrations are closed under pull-back we may assume w.l.o.g.\@ that $f$ is a cofibration. Then $f$ is a retract of a transfinite composition of pushouts of trivial cofibrations or morphisms of the form 
\[  (\partial \Delta_n \rightarrow \Delta_n) \boxplus f \]
in which $f$ is a cofibration in $S$. The morphism $f$ has property $\mathbf{P}_{fib}$ by assumption. 
Trivial cofibrations have property $\mathbf{P}_{fib}$ because the model category is right proper. 
We conclude by Lemma~\ref{LEMMAP2} and Lemma~\ref{LEMMABOXPLUSPFIB}.
\end{proof}

\begin{LEMMA}\label{LEMMAPFIBCOFIBRANT1}
Let $f: X \rightarrow Y$ be a morphism with property $\mathbf{P}_{fib}$ between cofibrant objects. Factor $f$ 
as
\[ \xymatrix{ X \ar[r]^g & Y' \ar[r]^h & Y } \]
with $g \in \Cof$ and $h \in \Fib \cap \mathcal{W}$. Then $g$ has property $\mathbf{P}_{fib}$ .  
\end{LEMMA}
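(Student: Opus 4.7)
The plan is to exhibit $g$ as a composition of two morphisms each having property $\mathbf{P}_{fib}$, and then to invoke Lemma~\ref{LEMMAP1}.2. I would form the pullback $P := Y' \times_Y X$ of $h$ along $f$, with projections $p_1 : P \to X$ and $p_2 : P \to Y'$. Since $hg = f$, the pair $(g, \id_X)$ defines a canonical section $s : X \to P$ satisfying $p_1 s = \id_X$ and $p_2 s = g$, so that $g = p_2 \circ s$.

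The first step is to observe that $s$ has $\mathbf{P}_{fib}$. Indeed, $p_1$ is a pullback of the trivial fibration $h$, hence is itself a trivial fibration; combined with $p_1 s = \id_X$, two-out-of-three forces $s \in \mathcal{W}$, and every morphism in $\mathcal{W}$ has $\mathbf{P}_{fib}$ by the remark immediately following Definition~\ref{DEFP} (using right properness of $\mathcal{M}$ together with $\mathcal{W} \subset \mathcal{W}_{loc}$).

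The second step is to check that $p_2$ has $\mathbf{P}_{fib}$. For any fibration $q : Z \to Y'$, pullback pasting identifies the pullback of $p_2$ along $q$ with the pullback of $f$ along the composite $h q : Z \to Y$, which is a fibration because fibrations are closed under composition. By the hypothesis that $f$ has $\mathbf{P}_{fib}$, this pullback lies in $\mathcal{W}_{loc}$.

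Finally, Lemma~\ref{LEMMAP1}.2 applied to the factorization $g = p_2 \circ s$, with $s$ in the role of the inner morphism having $\mathbf{P}_{fib}$, gives that $g$ has $\mathbf{P}_{fib}$ if and only if $p_2$ does, and we are done. The only point requiring any care is the pullback-pasting identification in the second step, but it is routine; it is worth noting that the cofibrancy hypotheses on $X$ and $Y$ do not seem to enter the argument at all, suggesting that they are only kept for consistency with the setting in which the lemma will be applied.
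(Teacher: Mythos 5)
Your proof is correct, and it takes a genuinely different route from the paper's. The paper uses the cofibrancy of $Y$ to choose a section $\sigma: Y \rightarrow Y'$ of the trivial fibration $h$, notes that $\sigma h$ is left homotopic to $\id_{Y'}$ so that $g$ and $\sigma f$ are left homotopic, and then combines Lemma~\ref{LEMMAP1}, 2.\@ with the homotopy-invariance of $\mathbf{P}_{fib}$ (Lemma~\ref{LEMMAPHOM}); that last ingredient is proved using $\Delta_1 \otimes X$ and the injective structure, so the paper's argument is tied to the simplicial presheaf setting and to the cofibrancy hypotheses. Your factorization $g = p_2 \circ s$ through the pullback $Y' \times_Y X$ replaces all of this by pullback pasting: $s$ is a section of the trivial fibration $p_1$ and hence a weak equivalence (so it has $\mathbf{P}_{fib}$ by right properness), the pullback of $p_2$ along a fibration $q$ is the pullback of $f$ along the fibration $hq$ (so $p_2$ has $\mathbf{P}_{fib}$ --- note it only gets $\mathbf{P}_{fib}$, not $\mathbf{P}$, but that is all Lemma~\ref{LEMMAP1}, 2.\@ needs), and the conclusion follows. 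Each step checks out. What your version buys is generality and economy: it is valid in any right proper model category with a left Bousfield localization, your observation that the cofibrancy of $X$ and $Y$ is never used is accurate, and it makes Lemma~\ref{LEMMAPHOM} (which the paper proves only to establish this lemma) dispensable. The paper's version buys nothing beyond staying within the homotopy-theoretic idiom it has already set up.
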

\begin{proof}
Since $Y'$ is cofibrant  there is a section $\sigma: Y \rightarrow Y'$ such that the composition $\sigma h: Y' \rightarrow Y'$ is left homotopic to the identity. Therefore the  morphisms $g, \sigma f: X \rightarrow Y'$ are left homotopic as well. Since $f$ and $\sigma$ have property $\mathbf{P}_{fib}$, by Lemma~\ref{LEMMAPHOM}, the same holds for $g$.
\end{proof}

\begin{LEMMA}\label{LEMMAPFIBCOFIBRANT2}
If a morphism $f$ has the property that pull-backs along fibrations (resp.\@ morphisms) {\em with cofibrant source} are in $\mathcal{W}_{loc}$ then $f$ has property $\mathbf{P}_{fib}$ (resp.\@ property $\mathbf{P}$). 
\end{LEMMA}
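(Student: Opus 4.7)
The plan is to reduce to the cofibrant-source hypothesis by cofibrantly replacing the target of the pullback and then transporting the weak equivalence back along a trivial fibration.

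Concretely, suppose we are given a fibration (resp.\@ an arbitrary morphism) $p\colon Z \to Y$ and wish to show that the pull-back of $f\colon X \to Y$ along $p$ lies in $\mathcal{W}_{loc}$. First, factor $\emptyset \to Z$ as a cofibration followed by a trivial fibration $q\colon Z' \to Z$, so that $Z'$ is cofibrant. The composition $p \circ q\colon Z' \to Y$ is then a fibration (resp.\@ a morphism) with cofibrant source, so the hypothesis applies and yields that
\[ X \times_Y Z' \to Z' \]
lies in $\mathcal{W}_{loc}$.

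Next I would compare this with the desired pullback by contemplating the square
\[ \xymatrix{
X \times_Y Z' \ar[r] \ar[d] & X \times_Y Z \ar[d] \\
Z' \ar[r]_-{q} & Z
} \]
which is Cartesian. Since $q$ is a trivial fibration and trivial fibrations are stable under arbitrary pull-back, the top horizontal morphism is itself a trivial fibration, hence lies in $\mathcal{W} \subset \mathcal{W}_{loc}$. Combined with $q \in \mathcal{W}$ on the bottom and the fact that the left vertical morphism is in $\mathcal{W}_{loc}$, two-out-of-three forces the right vertical morphism $X \times_Y Z \to Z$ into $\mathcal{W}_{loc}$, which is exactly what we want.

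No single step is really an obstacle here; the only thing to be careful about is making sure the same argument works in both the $\mathbf{P}_{fib}$ and $\mathbf{P}$ versions. For $\mathbf{P}_{fib}$ it matters that $p \circ q$ is still a fibration (which holds because fibrations are closed under composition), and for $\mathbf{P}$ no such property is needed. In both cases the comparison square works because it relies only on the pull-back stability of trivial fibrations, which holds in any model category without further hypothesis.
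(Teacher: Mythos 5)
Your proof is correct and is essentially the paper's own argument: both cofibrantly replace the source $Z$ of the pullback by a trivial fibration, apply the hypothesis to the composite (which is still a fibration in the $\mathbf{P}_{fib}$ case), and transfer the conclusion back through the Cartesian comparison square using pullback-stability of trivial fibrations and 2-out-of-3. No gaps.
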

\begin{proof}
Consider a diagram with Cartesian squares in which $W$ is a cofibrant replacement of $Z$: 
\[ \xymatrix{
W' \ar[rr]^{\Fib \cap \mathcal{W}}  \ar[d] & & Z' \ar[r]  \ar[d] & X \ar[d] \\
W \ar[rr]_{\Fib \cap \mathcal{W}}  & & Z \ar[r] & Y 
}\]
By assumption $W' \rightarrow W$ is in $\mathcal{W}_{loc}$. Thus by 2-out-of-3 the same holds for $Z' \rightarrow Z$ . 
\end{proof}


\begin{LEMMA}\label{LEMMABISIMP}
If $X \rightarrow Y$ is a morphism of bisimplicial presheaves in $\mathcal{SET}^{\mathcal{S}^{\op}\times \Delta^{\op}\times \Delta^{\op}}$
such that the horizontal morphisms of simplicial presheaves
\[ X_{\bullet,i} \rightarrow Y_{\bullet,i} \]
are in $\mathcal{W}_{loc}$ then the map of diagonal simplicial presheaves
\[ \delta^*X \rightarrow \delta^*Y \]
is in $\mathcal{W}_{loc}$. Here $\delta:  \Delta^{\op} \rightarrow \Delta^{\op}\times \Delta^{\op}$ is the diagonal. 
\end{LEMMA}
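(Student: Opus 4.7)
The plan is to realise the diagonal functor $\delta^*$ as the geometric realisation of the vertical row diagram via the standard coend identification
\[ \delta^* Z \;\cong\; \int^{[n] \in \Delta} Z_{\bullet, n} \otimes \Delta_n, \]
and then to induct on the vertical skeletal filtration. The injective model structure on $\mathcal{SET}^{\mathcal{S}^{\op} \times \Delta^{\op}}$ (whose section-wise weak equivalences are $\mathcal{W}$ and whose cofibrantly generated Bousfield localisation still has $\mathcal{W}_{loc}$ as its class of weak equivalences, as recorded earlier in the paper) has the monomorphisms as cofibrations. Since the vertical latching maps $L_n^v X \to X_{\bullet, n}$ of any bisimplicial presheaf are monomorphisms, every bisimplicial presheaf is automatically vertically Reedy cofibrant with respect to this ambient structure; no cofibrant replacement is needed.

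Writing $X^{(n)} := \mathrm{sk}_n^v X$ and $Y^{(n)} := \mathrm{sk}_n^v Y$, the coend formula yields a pushout description of $\delta^* X^{(n)}$ as $\delta^* X^{(n-1)}$ glued along the monomorphism
\[ (\partial \Delta_n \to \Delta_n) \boxplus (L_n^v X \to X_{\bullet, n}) \]
to $\Delta_n \otimes X_{\bullet, n}$, and analogously for $Y$. I would induct on $n$, simultaneously verifying that $\delta^* X^{(n)} \to \delta^* Y^{(n)}$ and $L_{n+1}^v X \to L_{n+1}^v Y$ lie in $\mathcal{W}_{loc}$. The induction step uses: (a) the row-wise hypothesis $X_{\bullet, n} \to Y_{\bullet, n} \in \mathcal{W}_{loc}$ together with the fact that tensoring with $\Delta_n$ or $\partial \Delta_n$ preserves $\mathcal{W}_{loc}$ (being a left Quillen functor); (b) the inductive hypothesis on latching objects, to conclude that the induced morphism between pushout data is in $\mathcal{W}_{loc}$; and (c) the closure of $\mathcal{W}_{loc}$ under pushouts along monomorphisms, valid in the injective Bousfield-localised structure since it is left proper. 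Taking the colimit in $n$ and invoking closure of $\mathcal{W}_{loc}$ under transfinite composition of monomorphisms then yields $\delta^* X \to \delta^* Y \in \mathcal{W}_{loc}$.

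The main obstacle is the handling of the vertical latching maps $L_n^v X \to L_n^v Y$, which are iterated colimits over the category of non-identity surjections out of $[n]$. These are addressed by a parallel skeletal induction: $L_n^v$ itself decomposes into pushouts along monomorphisms of $\Delta_k \otimes X_{\bullet, k}$-type data with $k < n$, to which exactly the same closure properties of $\mathcal{W}_{loc}$ apply. Once this parallel induction is set up cleanly (or equivalently once one observes that $L_n^v$ is a finite homotopy colimit between Reedy cofibrant diagrams and hence preserves $\mathcal{W}_{loc}$-equivalences), the remainder of the argument is a routine chase in the injective Bousfield-localised structure.
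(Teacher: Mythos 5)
Your argument is correct, but it takes a genuinely more hands-on route than the paper. The paper disposes of the lemma in two sentences: the diagonal of a bisimplicial presheaf computes the homotopy colimit over $\Delta^{\op}$ of its rows (Reedy cofibrancy being automatic because every object is cofibrant in the injective structure), and a left Bousfield localization, being a left adjoint, commutes with homotopy colimits; hence a row-wise $\mathcal{W}_{loc}$-equivalence induces a $\mathcal{W}_{loc}$-equivalence on diagonals. What you do is unwind both of these facts into the classical realization-lemma induction over the vertical skeleta, using the coend formula, the Eilenberg--Zilber observation that all vertical latching maps are monomorphisms, left properness and simpliciality of the localized injective structure (both of which do hold here, since the localization is cofibrantly generated and the injective structure is left proper, simplicial and combinatorial), and closure of $\mathcal{W}_{loc}$ under sequential colimits of monomorphisms. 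This is a valid, self-contained alternative whose benefit is that it makes explicit exactly which closure properties of $\mathcal{W}_{loc}$ are consumed; its cost is the latching-object bookkeeping. There your description of $L_n^v$ as assembled from ``$\Delta_k \otimes X_{\bullet,k}$-type data'' is not quite the right picture --- latching objects involve no tensoring with simplices, only colimits of the $X_{\bullet,k}$ over the surjection category --- but your parenthetical fallback, that $L_n^v$ of a Reedy cofibrant simplicial object is a finite homotopy colimit and therefore carries levelwise $\mathcal{W}_{loc}$-equivalences to $\mathcal{W}_{loc}$-equivalences, is exactly the standard fix; note that it is the same ``localizations commute with homotopy colimits'' principle that the paper invokes once, globally, to bypass the entire induction.
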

\begin{proof}
This follows from the fact that the homotopy colimit over $\Delta^{\op}$ can be computed by
the diagonal of a horizontally point-wise cofibrant diagram (hence no restriction in the injective model structure). Since the localization commutes with homotopy colimits we conclude. 
\end{proof}

\begin{PROP}\label{PROPCECHANDHYPERCOVERS}
\v{C}ech covers  (cf.\@ \ref{EX1}) of an object $X \in \mathcal{S}$ and hypercovers (cf.\@ \ref{EX2}) have property $\mathbf{P}$ w.r.t.\@ the respective class $\mathcal{W}_{loc}$.
\end{PROP}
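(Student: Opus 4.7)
The plan is to handle both examples uniformly by a bisimplicial reduction. By the property $\mathbf{P}$ version of Lemma~\ref{LEMMAPFIBCOFIBRANT2}, it suffices to show that for any cofibrant $T$ equipped with a morphism $T \to h_X$, the pull-back $U \times_{h_X} T \to T$ of the relevant cover $U \to h_X$ lies in $\mathcal{W}_{loc}$. To this end, one forms the bisimplicial presheaf $B$ with $B_{m,n} := U_m \times_{h_X} T_n$ together with the constant bisimplicial presheaf $T'_{m,n} := T_n$; their diagonals recover the map $U \times_{h_X} T \to T$. By Lemma~\ref{LEMMABISIMP} the problem is thus reduced to showing, for each fixed $n$, that the horizontal simplicial map $B_{\bullet,n} \to T_n$ (with $T_n$ viewed as constant in the remaining simplicial direction) lies in $\mathcal{W}_{loc}$.

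Projective cofibrancy of $T$ makes $T_n$ a coproduct $\coprod_j A_j$ of retracts $A_j \hookrightarrow h_{Y_j}$ of representables sitting over $h_X$. Since pull-back along $U \to h_X$ commutes with coproducts and retracts, and $\mathcal{W}_{loc}$ is stable under both, one is reduced to showing that $U \times_{h_X} h_Y \to h_Y$ is in $\mathcal{W}_{loc}$ for every morphism $Y \to X$ in $\mathcal{S}$. In the \v{C}ech case of Example~\ref{EX1} the pretopology axiom guarantees that $\{U_i \times_X Y \to Y\}$ is again a covering of $Y$, and the pull-back $U \times_{h_X} h_Y$ is precisely its \v{C}ech nerve; hence it belongs to $S$ up to cofibrant replacement, and so to $\mathcal{W}_{loc}$.

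For the hypercover case, Lemma~\ref{LEMMAHCPULLBACK} gives that $U \times_{h_X} h_Y \to h_Y$ is again a generalized hypercover, now with representable base. When $\mathcal{S}$ admits the fibre products needed to keep the source degreewise a coproduct of representables, this is a genuine hypercover in the sense of Example~\ref{EX2} and therefore in $\mathcal{W}_{loc}$; in the general case one has to appeal to the Dugger--Hollander--Isaksen result that every generalized hypercover is a weak equivalence in the hypercover localization. The main obstacle lies precisely in this last step for hypercovers without any fibre-product hypothesis on $\mathcal{S}$; everything else is a formal combination of the bisimplicial reduction, stability of $\mathcal{W}_{loc}$ under retracts and coproducts, and the pretopology axioms.
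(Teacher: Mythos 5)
Your overall strategy coincides with the paper's: reduce to cofibrant source via Lemma~\ref{LEMMAPFIBCOFIBRANT2}, form the bisimplicial presheaf $U_m \times_{h_X} T_n$ and apply Lemma~\ref{LEMMABISIMP}, then use projective cofibrancy to reduce, via stability of $\mathcal{W}_{loc}$ under coproducts and retracts, to the pull-back along a single representable $h_Y \to h_X$. The \v{C}ech case is then closed exactly as in the paper, using the pre-topology axiom. So everything up to the last step is fine.

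The gap you flag in the hypercover case is real as you have written it, but it is closable inside the paper's own framework, without any fibre-product hypothesis on $\mathcal{S}$ and without importing the full Dugger--Hollander--Isaksen theorem on generalized hypercovers. The point you are missing is a cofibrant-replacement trick: by Lemma~\ref{LEMMAHCPULLBACK} the map $U \times_{h_X} h_Y \to h_Y$ is a generalized hypercover, and one now chooses a \emph{projectively cofibrant replacement}
\[ Y' \longrightarrow U \times_{h_X} h_Y \longrightarrow h_Y , \]
so that $Y'$ is degree-wise a coproduct of representables. The replacement map $Y' \to U \times_{h_X} h_Y$ is a projective trivial fibration, in particular degree-wise a split epimorphism, hence itself a generalized hypercover (with split local epimorphisms); the composite $Y' \to h_Y$ is therefore a generalized hypercover with degree-wise representable source over a representable base, i.e.\@ a genuine hypercover in the sense of Example~\ref{EX2}, and so lies in $\mathcal{W}_{loc}$ (Example~\ref{EX2} records that \emph{all} hypercovers, not only those in the chosen dense generating set, remain weak equivalences in the localization). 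Since $Y' \to U \times_{h_X} h_Y$ is a weak equivalence, 2-out-of-3 gives $U \times_{h_X} h_Y \to h_Y \in \mathcal{W}_{loc}$. With this substitution your argument becomes complete and agrees with the paper's proof.
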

\begin{proof}
We make two preliminary considerations:

1.\@ If 
$U \rightarrow h_X$
is a \v{C}ech cover and
$Z \rightarrow h_X$
a morphism in which $Z$ is a coproduct of retracts of representables then also the pull-back
\[ U \times_{h_X} Z \rightarrow Z \]
is in $\mathcal{W}_{loc}$. For, if $Z = h_{X'}$ is representable, then $U \times_{h_X} h_{X'}$ is (by definition of a Grothendieck pre-topology) degree-wise representable and the morphism 
is a \v{C}ech-cover again. Since $\mathcal{W}_{loc}$ is closed under coproducts\footnote{because of the existence of the injective structure in which all objects are cofibrant}
this reduces the claim to the case in which $Z$ is a retract of a representable:
\[ Z \rightarrow h_{X'} \rightarrow Z \]
But then $U \times_{h_X} Z \rightarrow Z$ is a retract of $U \times_{h_X} h_{X'} \rightarrow h_{X'}$ and thus in $\mathcal{W}_{loc}$ as well.

2.\@ If $Y \rightarrow h_X$ is a hypercover and $Z \rightarrow h_X$
is a morphism in which $Z$ is a coproduct of retracts of representables, we claim that 
the pull-back
\[ Y \times_{h_X} Z \rightarrow Z \]
is again in $\mathcal{W}_{loc}$. Like before this is reduced to the case in which $Z = h_{X'}$ is representable. 
By Lemma~\ref{LEMMAHCPULLBACK}
the morphism 
\[ Y \times_{h_X} h_{X'} \rightarrow h_{X'} \]
is a generalized hypercover. Choose a projectively cofibrant replacement (this may be taken to be degree-wise a coproduct of representables) 
\[ Y' \rightarrow Y \times_{h_X} h_{X'} \rightarrow h_{X'}. \]
Then $Y' \rightarrow Y \times_{h_X} h_{X'}$, being a projective fibration, is also a generalized hypercover (in which the local epimorphisms are even split epimorphisms). 
Therefore $Y' \rightarrow h_{X'}$ is in $\mathcal{W}_{loc}$ and so is $Y \times_{h_X} h_{X'} \rightarrow h_{X'}$. 

Now we are able to prove the statement. Consider a pull-back diagram
\[ \xymatrix{
\Box \ar[r]^{}  \ar[d] & U \ar[d] \\
A \ar[r]^{} & h_X
}\]
in which $U \rightarrow h_X$ is a \v{C}ech cover, or an hypercover, respectively. 
We may assume that $A$ is cofibrant w.r.t.\@ the projective model category structure (Lemma~\ref{LEMMAPFIBCOFIBRANT2}), in particular is degree-wise a coproduct of retracts of representables. Then form the bisimplicial set 
\[ X_{i,j} := U_i \times_ X A_i. \]
The pullback is the diagonal of this bisimplicial set.
We have a morphism
\[ X \rightarrow A  \]
where $A$ is seen as a bisimplicial presheaf constant in the horizontal direction. 
Therefore the map on the diagonals (i.e.\@ homotopy colimits) is in $\mathcal{W}_{loc}$ by Lemma~\ref{LEMMABISIMP} if each
horizontal morphism
\[ X_{i,\bullet} \rightarrow A_i \]
is in $\mathcal{W}_{loc}$. This has been shown in the preliminary considerations. 
\end{proof}

We obtain the well-known fact: 

\begin{KOR}\label{COREXACT}
Let $\mathcal{S}$ be a small category with Grothendieck (pre-)topology. 
The left Bousfield localizations of $\mathcal{SET}^{\mathcal{S}^{\op} \times \Delta^{\op}}$ at all \v{C}ech covers (cf.\@ \ref{EX1}), and at all hypercovers  (cf.\@ \ref{EX2}), respectively, are exact in the sense of Theorem~\ref{SATZPROPEREXACTLOC}.
\end{KOR}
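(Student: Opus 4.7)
The plan is to string together the three workhorse results already established in the section: Theorem~\ref{SATZPROPEREXACTLOC}, Proposition~\ref{PROPEXACT}, and Proposition~\ref{PROPCECHANDHYPERCOVERS}. First I would invoke Theorem~\ref{SATZPROPEREXACTLOC}, which reduces the claim of exactness to showing that $\mathcal{W}_{loc}$ is stable under pull-back along fibrations, i.e.\ that every morphism in $\mathcal{W}_{loc}$ has property $\mathbf{P}_{fib}$. By Proposition~\ref{PROPEXACT}, this is further reduced to establishing property $\mathbf{P}_{fib}$ only for the morphisms in the generating set $S$ of cofibrations of the given localization.

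Next I would feed in Proposition~\ref{PROPCECHANDHYPERCOVERS}: the \v{C}ech covers $U \to h_X$ (respectively hypercovers $Y \to h_X$) have property $\mathbf{P}$ with respect to the corresponding $\mathcal{W}_{loc}$, and property $\mathbf{P}$ trivially implies property $\mathbf{P}_{fib}$. The elements of $S$ are, however, \emph{cofibration replacements} of these covers rather than the covers themselves, so one more step is needed to transfer the property. Given a cover $f: U \to h_X$, factor it as $U \xrightarrow{g} U' \xrightarrow{h} h_X$ with $g$ a cofibration and $h$ a trivial fibration; the morphism $g \in S$ is the cofibration replacement in question. Since $U$ (a degree-wise coproduct of representables, projectively cofibrant in the Reedy sense described just before Example~\ref{EX1}) and $h_X$ are both cofibrant, Lemma~\ref{LEMMAPFIBCOFIBRANT1} applies and yields that $g$ has property $\mathbf{P}_{fib}$, as required.

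Combining these three reductions, every $f\in S$ has property $\mathbf{P}_{fib}$, hence by Proposition~\ref{PROPEXACT} so does every element of $\mathcal{W}_{loc}$, and by Theorem~\ref{SATZPROPEREXACTLOC} the localization is exact. The only point that needs care is the cofibrancy of the source of the cover, required as a hypothesis of Lemma~\ref{LEMMAPFIBCOFIBRANT1}; for \v{C}ech nerves associated to coverings in a site with fiber products this is a standard check on latching objects, and for hypercovers the paper has already built cofibrancy into the setup of $S$ (cf.\ Example~\ref{EX2} and the dense-subset reduction of \cite[6.5]{DHI04}). No step in this argument is really an obstacle; essentially all of the genuine content was already concentrated in Proposition~\ref{PROPCECHANDHYPERCOVERS}, whose two preliminary reductions to representable targets handle the only nontrivial geometry.
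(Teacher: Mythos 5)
Your overall route is exactly the paper's: reduce exactness to property $\mathbf{P}_{fib}$ via Theorem~\ref{SATZPROPEREXACTLOC}, reduce to the generating set $S$ via Proposition~\ref{PROPEXACT}, get property $\mathbf{P}$ for the covers themselves from Proposition~\ref{PROPCECHANDHYPERCOVERS}, and transfer it to their cofibration replacements with Lemma~\ref{LEMMAPFIBCOFIBRANT1}. The one place where your write-up would fail as stated is the cofibrancy check needed for Lemma~\ref{LEMMAPFIBCOFIBRANT1}. You assert that the \v{C}ech nerve $U$ is projectively cofibrant because it is degree-wise a coproduct of representables, ``a standard check on latching objects.'' That check does not go through: projective cofibrancy requires the latching maps $L_nU \to U_n$ to be of the form $A \to A \amalg \coprod B_i$, whereas for a \v{C}ech nerve the degeneracy $s_j$ lands in the component indexed by $(i_0,\dots,i_j,i_j,\dots)$ via the partial diagonal $U_{i_j} \to U_{i_j}\times_X U_{i_j}$, which is not an isomorphism onto a summand unless the covering maps are monomorphisms. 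So for a general site the \v{C}ech nerve is \emph{not} projectively cofibrant, and Lemma~\ref{LEMMAPFIBCOFIBRANT1} cannot be applied to $U \to h_X$ in the projective structure.

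The paper sidesteps this by observing first that exactness (equivalently, property $\mathbf{P}_{fib}$ for all of $\mathcal{W}_{loc}$, by Theorem~\ref{SATZPROPEREXACTLOC}) depends only on the class of weak equivalences and not on the chosen right proper model structure, and then running the argument in the \emph{injective} structure, where every object is cofibrant and the hypothesis of Lemma~\ref{LEMMAPFIBCOFIBRANT1} is automatic. If you insert that one sentence --- ``the statement is model-structure independent, so work injectively'' --- in place of the latching-object claim, your proof coincides with the paper's and is complete.
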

\begin{proof}
The statement is independent of the choice of model category structure (satisfying the assumptions in the beginning of this section). 
We may thus work with the injective model category structure. Let $S$ be a class of cofibration replacements of the \v{C}ech covers (resp.\@ hypercovers). 
Proposition~\ref{PROPCECHANDHYPERCOVERS} and Lemma~\ref{LEMMAPFIBCOFIBRANT1} show that the morphisms in $S$ have property $\mathbf{P}_{fib}$.
We conclude by Proposition~\ref{PROPEXACT}.
\end{proof}

\section{Coverings of simplicial presheaves}

\begin{PAR}
In this section we fix the {\em projective} model category structure on $\mathcal{SET}^{\mathcal{S}^{\op} \times \Delta^{\op}}$.
\end{PAR}


\begin{PAR}\label{PARAXIOMS}
Fix a subcategory $\mathcal{COV} \subset \mathcal{SET}^{\mathcal{S}^{\op} \times \Delta^{\op}}$ of ``coverings''. Consider the following axioms on $\mathcal{COV}$:
\begin{enumerate}
\item[(C1)] Each object of $\mathcal{COV}$ is cofibrant.
\item[(C2)] Every representable presheaf (considered as constant simplicial presheaf) is in $\mathcal{COV}$.

\item[(C3)] Each morphism of $\mathcal{COV}$ is in $\mathcal{W}_{loc}$.
\item[(C4)] If $Y \in \mathcal{COV}$ and $Y' \rightarrow Y$ is in $\Fib \cap \mathcal{W}_{loc}$ with $Y'$ cofibrant then $Y' \rightarrow Y$ is in $\mathcal{COV}$.
\end{enumerate}

There is an obvious smallest and biggest choice of $\mathcal{COV}$ that satisfies (C1--4), but the choice will not matter. 
\end{PAR}

We now define a class of ``local weak equivalences'' depending on $\mathcal{COV}$ as follows:
\begin{DEF}\label{DEFWCOV}
Let $\mathcal{W}_{\mathcal{COV}}$ be the class of morphisms $f$ for which there is a diagram
\[ \xymatrix{
X \ar[r]^{\mathcal{W}}  \ar[d]_f & X' \ar[d]^{f'} \\
Y \ar[r]_{\mathcal{W}} & Y'
}\]
in which $f'$ has the local homotopy lifting property w.r.t.\@ $\mathcal{COV}$ (cf.\@ Definition~\ref{DEFHLP}), $X'$ and $Y'$ are 
fibrant and the horizontal morphisms are in $\mathcal{W}$. 
\end{DEF}

\begin{LEMMA}\label{LEMMAWCOV}
Let $\mathcal{COV}$ satisfy (C1) and (C2) but not necessarily the other axioms. 
\begin{enumerate}
\item Definition~\ref{DEFWCOV} is independent of the fibrant replacement. 
\item $\mathcal{W}_{\mathcal{COV}}$ satisfies 2-out-of-3. 
\item The class $\mathcal{W}_{\mathcal{COV}}$ is stable under pull-back along fibrations. 
\item We have $\mathcal{W} \subset \mathcal{W}_{\mathcal{COV}}$. 
\end{enumerate}
\end{LEMMA}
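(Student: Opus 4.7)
The plan is to leverage what the appendix presumably establishes: namely that the class $\mathcal{L}$ of morphisms \emph{between fibrant objects} having the local homotopy lifting property w.r.t.\ $\mathcal{COV}$ is homotopy invariant, satisfies 2-out-of-3, and is stable under pull-back along fibrations between fibrant objects. Once this is granted, the whole lemma becomes bookkeeping about how these properties transport through fibrant replacement. Only axioms (C1) and (C2) are used; (C1) ensures $\mathcal{COV}$ consists of cofibrant test objects (so trivial fibrations and weak equivalences between fibrant objects automatically lie in $\mathcal{L}$), and (C2) is what lets sectionwise weak equivalences enter the picture.

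For part~1, given two fibrant replacement squares
\[ \xymatrix{ X \ar[r]^{\mathcal{W}} \ar[d]_f & X_i' \ar[d]^{f_i'} \\ Y \ar[r]_{\mathcal{W}} & Y_i' } \qquad (i=1,2), \]
I factor the maps $X_1' \to X_2'$ and $Y_1' \to Y_2'$ (obtained by lifting against the fibrations $Y_2' \to *$ and $X_2' \to Y_2' \times_{Y_1'} X_1'$) and use the fibrancy of $X_2', Y_2'$ to produce a compatible morphism of squares whose horizontal components are in $\mathcal{W}$. Homotopy invariance of $\mathcal{L}$ then gives $f_1' \in \mathcal{L} \iff f_2' \in \mathcal{L}$. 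Part~4 follows immediately: any $f \in \mathcal{W}$ admits a fibrant replacement which is again a weak equivalence between fibrant objects, and any such map lies in $\mathcal{L}$ (simplicial homotopy inverse plus (C1)). Part~2 is then formal: given $f = hg$, choose fibrant replacements coherently so that $f' = h'g'$ with $f', g', h'$ all in the ambient fibrant world, and apply the 2-out-of-3 for $\mathcal{L}$ directly.

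For part~3, which is the main work, let $f: X \to Y$ be in $\mathcal{W}_{\mathcal{COV}}$ with witnessing fibrant replacement square $f': X' \to Y'$, and let $p: Z \to Y$ be a fibration. First factor the composite $Z \to Y \to Y'$ as a trivial cofibration $Z \to Z'$ followed by a fibration $Z' \to Y'$; since $Y'$ is fibrant, so is $Z'$, and the pull-back $q': X' \times_{Y'} Z' \to Z'$ is a fibration between fibrant objects. By the appendix stability result, $q' \in \mathcal{L}$. Now both the pullback square defining $X \times_Y Z$ (with $p \in \Fib$) and the pullback square defining $X' \times_{Y'} Z'$ (with $Z' \to Y' \in \Fib$) compute homotopy pullbacks by right properness of the global structure. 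Comparing them through the point-wise weak equivalences $X \to X'$, $Y \to Y'$, $Z \to Z'$ shows that the induced map $X \times_Y Z \to X' \times_{Y'} Z'$ is in $\mathcal{W}$. Hence the square
\[ \xymatrix{ X \times_Y Z \ar[r]^{\mathcal{W}} \ar[d] & X' \times_{Y'} Z' \ar[d]^{q'} \\ Z \ar[r]_{\mathcal{W}} & Z' } \]
witnesses that $X \times_Y Z \to Z$ lies in $\mathcal{W}_{\mathcal{COV}}$.

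The main obstacle is ensuring that \emph{both} of the two pullback squares in part~3 genuinely compute homotopy pullbacks; the choice to factor $Z \to Y'$ (rather than just fibrantly replace $p$ over $Y$) is precisely what makes right properness apply uniformly and thereby lets the three leg-wise weak equivalences combine to a weak equivalence on pullbacks. Everything else then reduces to the appendix toolkit for $\mathcal{L}$.
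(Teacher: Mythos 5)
Your proposal is correct and follows essentially the same route as the paper: parts 1, 2 and 4 reduce to the appendix's 2-out-of-3 lemma together with the fact that weak equivalences between fibrant objects (and trivial fibrations) have the local homotopy lifting property, and part 3 uses the identical device of factoring $Z \to Y \to Y'$ as a trivial cofibration followed by a fibration, pulling back over $Y'$, and invoking pullback-stability of the local lifting property plus right properness to see that $X \times_Y Z \to X' \times_{Y'} Z'$ is in $\mathcal{W}$. The only cosmetic difference is in part 1, where the paper compares two fibrant replacements via a common refinement rather than a directly constructed map (your lifting step as written needs $Y \to Y_1'$ to be a cofibration, so a factorization or refinement is required anyway), but this does not affect the substance.
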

\begin{proof}
1. Consider a refinement of fibrant replacements, i.e.\@ a diagram
\[ \xymatrix{ X \ar[r]^{\mathcal{W}}  \ar[d]^f & X' \ar[r]^{\mathcal{W}}  \ar[d]^{f'} & X'' \ar[d]^{f''} \\
Y \ar[r]_{\mathcal{W}} & Y' \ar[r]_{\mathcal{W}} & Y''
} \]
in which $X', Y', X''$ and $Y''$ are fibrant and the horizontal morphisms are in $\mathcal{W}$. A morphism in $\mathcal{W}$ between fibrant 
objects has the homotopy lifting property. 
Hence $f'$ has the homotopy local lifting property if and only if $f''$ has the homotopy local lifting property by 2-out-of-3 (Lemma~\ref{LEMMA2OUTOF3}).
Since any two fibrant replacements can be refined by a third one the statement follows. 

2. follows from 1.\@ using any functorial fibrant replacement and 2-out-of-3 for homotopy local lifting properties (Lemma~\ref{LEMMA2OUTOF3}).

3. Let $f: X \rightarrow Y$ be a morphism in $\mathcal{W}_{\mathcal{COV}}$. We can find a diagram
\[ \xymatrix{ X \ar[r]^{\mathcal{W}} \ar[d] & X' \ar[d]^{\Fib}  \\
Y \ar[r]_{\mathcal{W}} & Y' } \]
in which $X'$ and $Y'$ are fibrant. Let $Z \rightarrow Y$ be a given fibration. 
 We get the diagram
\[ \xymatrix{
X \times_Y Z \ar[rr] \ar[dd]_{\mathcal{W}} & & X \ar[dd]^(.3){\mathcal{W}} \\
& X' \times_{Y'} Z' \ar[rr]  \ar[dd]^(.3){\Fib} & & X' \ar[dd]^{\Fib} \\
\Box \ar[rr]_(.3){\Fib} \ar[ru]^{\mathcal{W}}  \ar[dd]_{\Fib} & & \Box \ar[ru]^{\mathcal{W}} \ar[dd]^(.2){\Fib} \\
& Z' \ar[rr] & & Y' \\
Z \ar[ru]^{\mathcal{W}} \ar[rr]_{\Fib} & & Y \ar[ru]_{\mathcal{W}}
} \]
in which $Z \rightarrow Z' \rightarrow Y'$ is the factorization of the  composition $Z \rightarrow Y \rightarrow Y'$ into trivial cofibration followed by fibration and
in which the middle floor is the pullback of the bottom floor under the fibration $X' \rightarrow Y'$. Hence all 5 upright squares are Cartesian. 
Thus the so indicated morphisms are weak equivalences by right properness. 
Note that  $X' \times_{Y'} Z' \rightarrow Z'$ has the local lifting property (Lemma~\ref{LEMMAPULLBACK}). 
This shows that $X \times_Y Z \rightarrow Z$ has a fibrant replacement which has the local lifting property. It is thus in $\mathcal{W}_{\mathcal{COV}}$ as well.

4. Let $f: X \rightarrow Y$ a morphism in $\mathcal{W}$. As in 3.\@ we may replace $f$ by a fibration which is then trivial by 2-out-of-3. 
A trivial fibration has obviously the (local) lifting property. 
\end{proof}

\begin{PROP}\label{PROPSELFLIFTING1}
If $\mathcal{COV}$ satisfies (C1) and (C2) then 
a fibration in $\mathcal{W}_{\mathcal{COV}}$ has the local lifting property itself. 
\end{PROP}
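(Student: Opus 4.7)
The plan is to use Lemma~\ref{LEMMAWCOV}(1) to replace the given fibrant replacement of $f$ by a well-chosen one, and then to transfer the local lifting property across a weak equivalence of fibrations over $Y$. Given a fibration $f : X \to Y$ in $\mathcal{W}_{\mathcal{COV}}$, one starts with a diagram as in Definition~\ref{DEFWCOV} with $X', Y'$ fibrant and $f' : X' \to Y'$ enjoying the local homotopy lifting property w.r.t.\@ $\mathcal{COV}$. By factoring $X \to Y'$ as a trivial cofibration followed by a fibration and invoking independence of fibrant replacement (Lemma~\ref{LEMMAWCOV}(1)), I may assume in addition that $f'$ is itself a fibration.

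Next I would form the pullback $P := X' \times_{Y'} Y$. The projection $P \to Y$ is a fibration as a pullback of $f'$, and by right properness the other projection $P \to X'$ is a weak equivalence (pullback of the weak equivalence $Y \to Y'$ along the fibration $f'$). Hence by 2-out-of-3 the induced map $\iota : X \to P$ over $Y$ is a weak equivalence, and by Lemma~\ref{LEMMAPULLBACK} the morphism $P \to Y$ inherits the local homotopy lifting property w.r.t.\@ $\mathcal{COV}$ from $f'$.

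To transfer the lifting property from $P \to Y$ to $f$, I would factor $\iota$ as $X \xrightarrow{j} Q \xrightarrow{p} P$ with $j$ a trivial cofibration and $p$ a trivial fibration. Then $Q \to Y$, being a composition of two fibrations, is itself a fibration; since $p$ has the strict right lifting property w.r.t.\@ all cofibrations (and the objects of $\mathcal{COV}$ are cofibrant by (C1)), a two-step lifting argument shows that $Q \to Y$ inherits the local homotopy lifting property from $P \to Y$. Because $f$ is a fibration and $j$ is a trivial cofibration, a diagonal filler for the square with top row $\id_X$, bottom row $Q \to Y$, left side $j$ and right side $f$ produces a retraction $r : Q \to X$ over $Y$, realising $f$ as a retract of $Q \to Y$ in the arrow category over $Y$. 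Closure of the local homotopy lifting property under retracts then gives the claim.

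The main obstacle, beyond bookkeeping of squares, is that the whole argument rests on formal closure properties of the local homotopy lifting property --- pullback (Lemma~\ref{LEMMAPULLBACK}), composition (in particular the fact that post-composition with a trivial fibration preserves local lifting), and retracts --- rather than on any genuinely non-formal input. I expect all of these to be available from the appendix machinery developed around Definition~\ref{DEFHLP}.
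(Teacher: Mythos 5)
Your argument is correct and, up to the last step, is exactly the paper's proof: you replace the fibrant replacement so that $f'$ is a fibration between fibrant objects with the local lifting property (on the nose, via Lemma~\ref{LEMMALOCFIBHOMOTOPY}), pull it back along $Y\rightarrow Y'$ to get $P\rightarrow Y$ with the local lifting property (Lemma~\ref{LEMMAPULLBACK}), and factor $X\rightarrow P$ as a trivial cofibration $j$ followed by a trivial fibration $p$, so that $Q\rightarrow Y$ has the local lifting property by composition (Lemma~\ref{LEMMA2OUTOF3}, 1.). You diverge only at the very end: the paper transfers the property from $Q\rightarrow Y$ to $f$ via Lemma~\ref{LEMMATFIBLIFTING} (two trivial cofibrations into fibrations with equal composites), whereas you use that $f$ is a fibration to lift $\id_X$ against $j$ and exhibit $f$ as a retract of $Q\rightarrow Y$ in the arrow category over $Y$, then invoke closure of the local (homotopy) lifting property under retracts. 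That closure is not stated in the appendix, but the standard retract argument for lifting properties goes through verbatim in the local setting (compose the given square with the inclusion, take the local lift over the same refinement $W'\rightarrow W$, and post-compose with the retraction; the homotopy, if any, is transported by the retraction on targets), so this is a harmless substitution. Your route is arguably slightly more self-contained --- it avoids Lemma~\ref{LEMMATFIBLIFTING}, whose own proof is a pullback-and-2-out-of-3 argument --- at the cost of one unstated but routine closure property; the paper's route keeps all formal manipulations of local lifting confined to the appendix lemmas.
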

\begin{proof}
Let $f \in \mathcal{W}_{\mathcal{COV}} \cap \Fib$. We find a commutative diagram
\[ \xymatrix{ 
  X \ar[rr]^{\Cof \cap \mathcal{W}} \ar[d]_{f} &&   \ar[d]^{f'} X' \\
  Y \ar[rr]_{\Cof \cap \mathcal{W}} & & Y'
} \]
in which $f'$ is a fibration between fibrant objects that has the local lifting property. 
Now form the pull-back and factor the induced morphism as indicated: 
\[ \xymatrix{ 
X \ar[rd]^{\Cof \cap \mathcal{W}} \ar@/^20pt/[rrrrdd] \ar@/_20pt/[rrdddd]_f  \\
&  \ar[rd]^{h \in \Fib \cap \mathcal{W}} &  & \\
& &  \Box \ar[dd]^{f''} \ar[rr]^{\mathcal{W}}  & & X'  \ar[dd]^{f'} \\
\\
& & Y \ar[rr]_{\Cof \cap \mathcal{W}} & & Y'
} \]
 
Here $f''$ has the local lifting property. Thus $f'' h$ has the local lifting property and thus also $f$ by Lemma~\ref{LEMMATFIBLIFTING}. 
\end{proof}



$\mathcal{W}_{\mathcal{COV}}$ shares the following property with $\mathcal{W}_{loc}$:
\begin{LEMMA}\label{LEMMAOFIBTFIB}Assume that $\mathcal{COV}$ satisfies (C1), (C2), and (C3), then 
\[ \Fib_{loc} \cap \mathcal{W}_{\mathcal{COV}} \subset \Fib \cap \mathcal{W}. \] 
\end{LEMMA}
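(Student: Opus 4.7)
The plan is to combine the inclusion $\Fib_{loc}\subset\Fib$ with Proposition~\ref{PROPSELFLIFTING1} to show that any $f\in\Fib_{loc}\cap\mathcal{W}_{\mathcal{COV}}$ is a trivial projective fibration. Since $f$ is automatically a projective fibration, it suffices to verify the right lifting property against the generating projective cofibrations $\partial\Delta_n\otimes h_X\hookrightarrow\Delta_n\otimes h_X$ for all $n\geq 0$ and $X\in\mathcal{S}$.

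By Proposition~\ref{PROPSELFLIFTING1} (whose hypotheses (C1) and (C2) are in force), $f$ itself has the local lifting property with respect to $\mathcal{COV}$. Given a lifting problem of the above form, I would apply this property at $h_X$, which lies in $\mathcal{COV}$ by (C2). This produces a refining morphism $Y\to h_X$ in $\mathcal{COV}$ together with a compatible lift on $\Delta_n\otimes Y$ extending the prescribed boundary data on $\partial\Delta_n\otimes Y$.

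To upgrade the local lift to a genuine one, the remaining obstruction is a pushout-product lifting problem against $(\partial\Delta_n\to\Delta_n)\boxplus(Y\to h_X)$. By (C3) the morphism $Y\to h_X$ lies in $\mathcal{W}_{loc}$, and since the localization remains a simplicial model category, this pushout-product is a trivial cofibration in $\mathcal{W}_{loc}$. Because $f\in\Fib_{loc}$, the required lift then exists, finishing the argument.

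The one technical point I expect to wrestle with is that $Y\to h_X$ is a priori only a morphism in $\mathcal{COV}$ and need not be a projective cofibration, so that the pushout-product is not manifestly a projective cofibration. I would circumvent this by first factoring $Y\to h_X$ as a projective cofibration $Y\hookrightarrow Z$ followed by a projective trivial fibration $Z\to h_X$; then $Z$ is cofibrant (since $Y$ is, by (C1)), both factors belong to $\mathcal{W}_{loc}$ by (C3) together with $\mathcal{W}\subset\mathcal{W}_{loc}$ and 2-out-of-3, and the cofibrancy of $h_X$ supplies a section of $Z\to h_X$. The local lift is then transferred to $\Delta_n\otimes Z$ via the honest trivial cofibration $Y\hookrightarrow Z$ against $f\in\Fib_{loc}$, and finally pulled back along the section to yield the required lift on $\Delta_n\otimes h_X$.
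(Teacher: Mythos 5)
Your proposal is correct and follows essentially the same route as the paper: reduce to the right lifting property against the generating cofibrations $\partial\Delta_n\otimes h_X\to\Delta_n\otimes h_X$, invoke Proposition~\ref{PROPSELFLIFTING1} to get a local lift over some $Y\to h_X$ in $\mathcal{COV}$, factor $Y\to h_X$ as a cofibration in $\mathcal{W}_{loc}$ followed by a trivial fibration, descend the lift using $f\in\Fib_{loc}$ against the pushout-product, and finish via a section of the trivial fibration onto the cofibrant $h_X$. The technical caveat you flag (that $Y\to h_X$ need not be a cofibration) is exactly the point the paper's factorization handles, so your fix matches the published argument.
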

\begin{proof}
Let $f \in \Fib_{loc} \cap \mathcal{W}_{\mathcal{COV}}$. 
Consider a diagram
\[ \xymatrix{
  \partial \Delta_n \otimes h_X \ar[r]^{}  \ar[d] & A \ar[d]^f \\
 \Delta_n  \otimes h_X  \ar[r]^{} & B 
}\]
in which $X$ is an object in $\mathcal{S}$ (and thus $h_X \in \mathcal{COV}$ by C2). Since $f$ is in particular in $\Fib$, by Proposition~\ref{PROPSELFLIFTING1}, it has itself the local lifting property. Thus
there is a lift
\[ \xymatrix{
\partial \Delta_n \otimes X'   \ar[r]^{}  \ar[d] & \partial \Delta_n  \otimes h_X  \ar[r]^{}  \ar[d] & A \ar[d]^f \\
\Delta_n \otimes X'    \ar@{-->}[rru]^h \ar[r]^{} & \Delta_n \otimes h_X   \ar[r]^{} & B 
}\]
in which $X' \rightarrow h_X$ is in $\mathcal{COV}$ and hence in $\mathcal{W}_{loc}$ by (C3). Factor $X' \rightarrow h_X$ as 
\[ \xymatrix{X' \ar[rr]^{\Cof \cap \mathcal{W}_{loc}} & & X'' \ar[rr]^{\Fib \cap \mathcal{W}} & & h_X. } \]
Then $h$ descends by definition of $\Fib_{loc}$ because 
the morphism 
\[ (\partial \Delta_n \to \Delta_n) \boxplus  (X' \rightarrow X'') \]
is a cofibration in $\mathcal{W}_{loc}$:

\[ \xymatrix{
\partial \Delta_n \otimes X'   \ar[r]^{}  \ar[d] & \partial \Delta_n\otimes X''   \ar[r]^{}  \ar[d] &  \partial \Delta_n \otimes h_X \ar[r]^{}  \ar[d] & A \ar[d]^f \\
\Delta_n \otimes X'    \ar[r]^{} & \Delta_n\otimes X''  \ar@{-->}[rru]^h \ar[r]^{} & \Delta_n \otimes h_X   \ar[r]^{} & B 
}\]
The morphisms $X'' \rightarrow h_X$ are trivial fibrations between cofibrant objects and thus are deformation retractions.
This shows that we have finally also a lift
\[ \xymatrix{
\partial \Delta_n \otimes h_X   \ar[r]^{}  \ar[d] & A \ar[d]^f \\
\Delta_n \otimes h_X    \ar[r]^{} \ar@{-->}[ru] & B 
}\]
Hence $f$ is a trivial projective fibration. 
\end{proof}

\begin{LEMMA}\label{THELEMMA}
Assume that $\mathcal{COV}$ satisfies (C1), (C2), and (C3). Then
$\mathcal{W}_{loc} \subset \mathcal{W}_{\mathcal{COV}}$ implies $\mathcal{W}_{loc} = \mathcal{W}_{\mathcal{COV}}$.
\end{LEMMA}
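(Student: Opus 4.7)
The hypothesis already supplies the inclusion $\mathcal{W}_{loc} \subset \mathcal{W}_{\mathcal{COV}}$, so the task reduces to proving the reverse inclusion $\mathcal{W}_{\mathcal{COV}} \subset \mathcal{W}_{loc}$. My plan is to reduce an arbitrary morphism in $\mathcal{W}_{\mathcal{COV}}$ to one which is a local fibration in $\mathcal{W}_{\mathcal{COV}}$, where Lemma~\ref{LEMMAOFIBTFIB} can be applied to finish the argument.

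Given $f \in \mathcal{W}_{\mathcal{COV}}$, I would factor $f = p \circ i$ using the Bousfield localized model structure, so that $i \in \Cof \cap \mathcal{W}_{loc}$ is a trivial local cofibration and $p \in \Fib_{loc}$ is a local fibration. The hypothesis $\mathcal{W}_{loc} \subset \mathcal{W}_{\mathcal{COV}}$ then places $i$ in $\mathcal{W}_{\mathcal{COV}}$, and since $f \in \mathcal{W}_{\mathcal{COV}}$ by assumption, the 2-out-of-3 property for $\mathcal{W}_{\mathcal{COV}}$ (Lemma~\ref{LEMMAWCOV}, 2.) gives $p \in \mathcal{W}_{\mathcal{COV}}$.

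At this point $p$ lies in $\Fib_{loc} \cap \mathcal{W}_{\mathcal{COV}}$, so Lemma~\ref{LEMMAOFIBTFIB} forces $p \in \Fib \cap \mathcal{W}$; in particular $p \in \mathcal{W} \subset \mathcal{W}_{loc}$. Combined with $i \in \mathcal{W}_{loc}$, closure of $\mathcal{W}_{loc}$ under composition yields $f = p \circ i \in \mathcal{W}_{loc}$, as required.

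There is no real obstacle at this step: once Lemma~\ref{LEMMAOFIBTFIB} has been established, the statement is essentially a formal consequence extracted by factoring in the local model structure and combining 2-out-of-3 for $\mathcal{W}_{\mathcal{COV}}$ with the fact that local fibrations in $\mathcal{W}_{\mathcal{COV}}$ are already globally trivial fibrations. The real content sits in Lemma~\ref{LEMMAOFIBTFIB} (and Proposition~\ref{PROPSELFLIFTING1} feeding into it); the present lemma merely cashes it in.
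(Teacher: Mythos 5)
Your argument is correct, and it is in fact a more direct route than the one the paper takes. Both proofs rest on the same two ingredients --- 2-out-of-3 for $\mathcal{W}_{\mathcal{COV}}$ (Lemma~\ref{LEMMAWCOV}, 2.) and the inclusion $\Fib_{loc}\cap\mathcal{W}_{\mathcal{COV}}\subset \Fib\cap\mathcal{W}$ (Lemma~\ref{LEMMAOFIBTFIB}) --- but the paper first factors $f$ globally as a cofibration followed by a trivial fibration and then reduces to showing that a cofibration in $\mathcal{W}_{\mathcal{COV}}$ has the left lifting property against every $g\in\Fib_{loc}$; the localized factorization and the appeal to Lemma~\ref{LEMMAOFIBTFIB} are applied there to an auxiliary morphism $X'\to Y$ obtained by pulling back $g$ and factoring. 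You instead apply the localized factorization $f=p\circ i$, $i\in\Cof\cap\mathcal{W}_{loc}$, $p\in\Fib_{loc}$, directly to $f$ and conclude in three lines, skipping the lifting diagram entirely. What the paper's detour buys is the explicit statement that $\Cof\cap\mathcal{W}_{\mathcal{COV}}$ lifts against $\Fib_{loc}$, i.e.\@ that these are exactly the trivial cofibrations of the localized structure --- but that also follows once the equality of weak equivalence classes is known, so nothing essential is lost. One terminological caution: you call $p\in\Fib_{loc}$ a ``local fibration,'' but in this paper that phrase is reserved for the local-lifting notion of Definition~\ref{DEFLOCFIB}; what you need, and what Lemma~\ref{LEMMAOFIBTFIB} is stated for, is membership in $\Fib_{loc}$, the fibrations of the Bousfield-localized model structure, so the mathematics is unaffected.
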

\begin{proof}
This is a standard argument using Lemma~\ref{LEMMAOFIBTFIB}. Let $f \in \mathcal{W}_{\mathcal{COV}}$. Factor $f$ as cofibration followed by a trivial fibration. The trivial fibration is in $\mathcal{W}_{\mathcal{COV}}$. Thus it suffices to show:
Given a fibration $g \in \Fib_{loc}$ and $f \in  \mathcal{W}_{\mathcal{COV}} \cap \Cof$ then $f$ has the left lifting property w.r.t.\@ $g$. Consider the following commutative diagram in which the right hand square is Cartesian and factor $X \rightarrow \Box$ as indicated:
\[ \xymatrix{
X \ar[dd]_f \ar[rrrr] \ar[rrd]^{\mathcal{W}_{loc} \subset \mathcal{W}_{\mathcal{COV}}} && && \Box \ar[r] \ar[dd]^{\Fib_{loc}} & A \ar[dd]^{g\in \Fib_{loc}} \\
&& X' \ar[rru]_{\Fib_{loc}}  \ar[rrd]^{\Fib_{loc}} \\
Y \ar@{-->}[rru]\ar@{=}[rrrr]&&&& Y \ar[r] & B 
} \]
Since the morphism $X \rightarrow Y$ is in $\mathcal{W}_\mathcal{COV}$ also $X' \rightarrow Y$ must be in $\mathcal{W}_\mathcal{COV}$ and hence by Lemma~\ref{LEMMAOFIBTFIB} in $\Fib \cap \mathcal{W}$. Therefore a lift indicated by the dotted arrow exists using merely that $f: X \rightarrow Y$ is a cofibration. 
\end{proof}

\begin{PROP}\label{PROPSELFLIFTING2}Assume that $\mathcal{COV}$ satisfies (C1), (C2), and (C3).
If $\mathcal{W}_{loc} \subset \mathcal{W}_{\mathcal{COV}}$ then a fibration is in $\mathcal{W}_{\mathcal{COV}}$ if and only if it has the local lifting property itself. 
\end{PROP}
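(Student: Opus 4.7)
The forward direction is immediate from Proposition~\ref{PROPSELFLIFTING1}, which uses only (C1) and (C2): any fibration in $\mathcal{W}_{\mathcal{COV}}$ has the local lifting property. For the converse, assume $f \colon X \to Y$ is a fibration with the local lifting property; the task will be to exhibit a square as in Definition~\ref{DEFWCOV} witnessing $f \in \mathcal{W}_{\mathcal{COV}}$.

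I would construct a fibrant-replacement square as follows. Pick a trivial cofibration $Y \to Y'$ with $Y'$ fibrant, and factor $X \to Y \to Y'$ as a trivial cofibration $X \to X'$ followed by a fibration $f' \colon X' \to Y'$; then $X'$ is automatically fibrant, yielding a commutative square
\[
\xymatrix{ X \ar[r]^{\mathcal{W}} \ar[d]_f & X' \ar[d]^{f'} \\ Y \ar[r]_{\mathcal{W}} & Y' }
\]
with $f'$ a fibration between fibrant objects and horizontal legs in $\mathcal{W}$. By Definition~\ref{DEFWCOV}, to conclude $f \in \mathcal{W}_{\mathcal{COV}}$ it suffices to show that $f'$ inherits the local lifting property.

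For this, my plan is to apply the 2-out-of-3 principle for homotopy local lifting (Lemma~\ref{LEMMA2OUTOF3}) to the identity $f' \circ (X \to X') = (Y \to Y') \circ f$, using that a weak equivalence between fibrant objects has the homotopy lifting property (the observation employed in the proof of Lemma~\ref{LEMMAWCOV}(1)); the local lifting property of $f$ would then propagate to $f'$. The hard part will be that $X$ and $Y$ are not a priori fibrant, so the horizontal weak equivalences are not automatically between fibrant objects and Lemma~\ref{LEMMAWCOV}(1)'s argument does not apply verbatim. To handle this, I would first functorially fibrantly replace the arrow $f$ itself, producing a fibration $\tilde f$ between fibrant objects related to $f$ by a weak-equivalence-of-arrows; propagating local lifting from $f$ to $\tilde f$ and then from $\tilde f$ to $f'$ (which are now connected by weak equivalences between fibrant objects, where 2-out-of-3 applies directly) gives the conclusion. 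As a backup, since Lemma~\ref{THELEMMA} already provides $\mathcal{W}_{loc} = \mathcal{W}_{\mathcal{COV}}$ under the hypothesis, one may instead try to reduce to showing $f \in \mathcal{W}_{loc}$ via a localized factorization $f = g \circ i$ with $i$ a trivial cofibration in the localized structure and $g \in \Fib_{loc}$, using Lemma~\ref{LEMMAOFIBTFIB} to force $g$ to be a trivial fibration; the crux in this variant is to verify that $g$ still enjoys local lifting, which one would derive from that of $f$ together with the small-object description of $i$ in terms of pushouts of trivial cofibrations and of morphisms $(\partial\Delta_n \to \Delta_n)\boxplus s$ with $s \in S$.
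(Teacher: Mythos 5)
Your forward direction is fine. The converse, as you primarily set it up, has a genuine gap. Your main plan is to build the fibrant replacement $f'\colon X'\to Y'$ and transport the local lifting property from $f$ to $f'$ by 2-out-of-3; but the comparison maps $X\to X'$ and $Y\to Y'$ are weak equivalences whose \emph{sources are not fibrant}, and such maps need not have the (homotopy) local lifting property. Already for simplicial sets, the inclusion of the spine of $\Delta_2$ into $\Delta_2$ is a trivial cofibration, yet the square with $\partial\Delta_1\to\Delta_1$ on the left, the two endpoints $0,2$ on top and the edge $02$ on the bottom admits no lift, even up to homotopy of the lower triangle, because the spine has no edge from $0$ to $2$. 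Your proposed repair (first fibrantly replace the arrow $f$) merely relocates the same difficulty: you still have to pass the local lifting property across a square whose legs are weak equivalences out of non-fibrant objects, and Lemma~\ref{LEMMATFIBLIFTING} does not apply either, since $Y\to Y'$ is not a fibration with the same target as $f$. A further warning sign is that this plan never invokes the hypothesis $\mathcal{W}_{loc}\subset\mathcal{W}_{\mathcal{COV}}$, which the statement genuinely needs.

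Your backup plan is the correct skeleton and is in fact what the paper does: factor $f=g\circ i$ with $i\in\Cof\cap\mathcal{W}_{loc}$ and $g\in\Fib_{loc}$, show that $g$ has the local lifting property, conclude via the \emph{proof} of Lemma~\ref{LEMMAOFIBTFIB} (which only uses local lifting, not membership in $\mathcal{W}_{\mathcal{COV}}$) that $g$ is a trivial fibration, and then $f\in\mathcal{W}_{\mathcal{COV}}$ since $i\in\mathcal{W}_{loc}\subset\mathcal{W}_{\mathcal{COV}}$, $g\in\mathcal{W}\subset\mathcal{W}_{\mathcal{COV}}$ and $\mathcal{W}_{\mathcal{COV}}$ satisfies 2-out-of-3. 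However, you leave the crux --- that $g$ inherits local lifting from $f$ --- to a cell-by-cell analysis of $i$ via the small object argument, and it is not at all clear that the local (homotopy) lifting property is closed under the pushouts, transfinite compositions and retracts this would require; the paper deliberately avoids that route. Instead it further factors $i$ as a trivial cofibration $X\to X'$ followed by a fibration $X'\to X''$, observes that $X'\to X''\in\Fib\cap\mathcal{W}_{loc}\subset\Fib\cap\mathcal{W}_{\mathcal{COV}}$ --- this is exactly where the hypothesis enters --- so that it has local lifting by Proposition~\ref{PROPSELFLIFTING1}, applies Lemma~\ref{LEMMATFIBLIFTING} to the two factorizations $f=f\circ\id_X$ and $f=(X'\to Y)\circ(X\to X')$ to conclude that the fibration $X'\to Y$ has local lifting, and finally obtains local lifting for $g\colon X''\to Y$ by 2-out-of-3 (Lemma~\ref{LEMMA2OUTOF3}). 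You would need to substitute this (or an equivalent argument) for your small-object step.
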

\begin{proof}
One direction is Proposition~\ref{PROPSELFLIFTING1}.
For the converse assume that $f$ is a fibration which has the local lifting property.
Factor $f$ as a morphism in $\Cof \cap \mathcal{W}_{loc}$ followed by a morphism in $\Fib_{loc}$. Then factor the morphism in $\Cof \cap \mathcal{W}_{loc}$ as trivial cofibration followed by
fibration. The fibration is in $\mathcal{W}_{loc}$ by 2-out-of-3 and thus in $\mathcal{W}_{\mathcal{COV}}$ by assumption:
\[ \xymatrix{ X \ar[rr]^{\Cof \cap \mathcal{W}}&  & X' \ar[rr]^-{\Fib \cap \mathcal{W}_{\mathcal{COV}}} & & X'' \ar[rr]^{\Fib_{loc}}  & & Y.  } \]
By Lemma~\ref{LEMMATFIBLIFTING} the fibration (composition of the last 2 morphisms) has the local lifting property. The second morphism has also the local lifting property by Proposition~\ref{PROPSELFLIFTING1}. 
Therefore, by 2-out-of-3 (Lemma~\ref{LEMMA2OUTOF3}), also the morphism in $\Fib_{loc}$ has the local lifting property and is therefore by the proof of Lemma~\ref{LEMMAOFIBTFIB} a trivial fibration.
In total the morphism $f$ is in $\mathcal{W}_{\mathcal{COV}}$.
\end{proof}

The following theorem summarizes the discussion:

\begin{SATZ}\label{MAINTHEOREMEXACT}
Let $\mathcal{S}$ be a small category. 
Choose the projective model category structure on $\mathcal{SET}^{\mathcal{S}^{\op} \times \Delta^{\op}}$. Consider a (cofibrantly generated) left Bousfield localization with class $\mathcal{W}_{loc}$ of weak equivalences. 
Let $\mathcal{COV}$ be a subcategory of coverings satisfying (C1--4) of \ref{PARAXIOMS}.
The following are equivalent:
\begin{enumerate}
\item $\mathcal{W}_{loc}$ is stable under pull-back along fibrations;
\item $S$, a generating set of cofibrations, goes to $\mathcal{W}_{loc}$ under pull-back along fibrations with cofibrant source;
\item The left Bousfield localization is exact, i.e.\@ the localization functor (left adjoint) commutes with (homotopically) finite homotopy limits;
\item $\mathcal{W}_{loc} \subset \mathcal{W}_{\mathcal{COV}}$;
\item $\mathcal{W}_{loc} = \mathcal{W}_{\mathcal{COV}}$.
\end{enumerate}
\end{SATZ}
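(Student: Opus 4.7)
The plan is to close the cycle of implications, leveraging the results already in place and isolating the one genuinely new direction.

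First I would collect the straightforward pieces. The equivalence $(1)\Leftrightarrow(3)$ is Theorem~\ref{SATZPROPEREXACTLOC}. For $(1)\Rightarrow(2)$ there is nothing to show, since $S\subset\mathcal{W}_{loc}$. Conversely, $(2)\Rightarrow(1)$ follows by first invoking Lemma~\ref{LEMMAPFIBCOFIBRANT2} to upgrade the pull-back condition on elements of $S$ into property~$\mathbf{P}_{fib}$ for every $f\in S$, and then applying Proposition~\ref{PROPEXACT} to propagate $\mathbf{P}_{fib}$ from $S$ to all of $\mathcal{W}_{loc}$. The equivalence $(4)\Leftrightarrow(5)$ splits: $(5)\Rightarrow(4)$ is trivial and $(4)\Rightarrow(5)$ is Lemma~\ref{THELEMMA}. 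Finally $(4)\Rightarrow(1)$ is immediate from Lemma~\ref{LEMMAWCOV}, 3.\@: once we know $\mathcal{W}_{loc}=\mathcal{W}_{\mathcal{COV}}$, stability of the latter class under pull-back along fibrations gives the required stability of the former.

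The one remaining implication is $(1)\Rightarrow(4)$. Given $f\colon X\to Y$ in $\mathcal{W}_{loc}$, I plan to exhibit a square as in Definition~\ref{DEFWCOV}. First I fibrantly replace $Y \xrightarrow{\Cof\cap\mathcal{W}} Y^{*}$ and then factor the composite $X\to Y\to Y^{*}$ as $X\xrightarrow{\Cof\cap\mathcal{W}}X^{*}\xrightarrow{\Fib}Y^{*}$; the induced map $f^{*}\colon X^{*}\to Y^{*}$ is then a global fibration lying in $\mathcal{W}_{loc}$ by 2-out-of-3, and the horizontal maps are in $\mathcal{W}$ as Definition~\ref{DEFWCOV} demands. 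Under hypothesis~$(1)$, Lemma~\ref{LEMMAP1}, 1.\@ upgrades $f^{*}$ to property~$\mathbf{P}$: every pull-back of $f^{*}$ lies in $\Fib\cap\mathcal{W}_{loc}$. It then remains to verify that $f^{*}$ has the local homotopy lifting property with respect to $\mathcal{COV}$.

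Given a test diagram indexed by $Y_{0}\in\mathcal{COV}$, I would form the pull-back $P:=X^{*}\times_{Y^{*}}(\Delta_{n}\otimes Y_{0})$, whose projection $\pi\colon P\to\Delta_{n}\otimes Y_{0}$ lies in $\Fib\cap\mathcal{W}_{loc}$ by property~$\mathbf{P}$. Taking a global cofibrant replacement $\widetilde{P}\to P$ and factoring the resulting composite down to $Y_{0}$ as $\widetilde{P}\xrightarrow{\Cof\cap\mathcal{W}}Y_{0}'\xrightarrow{\Fib}Y_{0}$, the right-hand map lies in $\Fib\cap\mathcal{W}_{loc}$ with $Y_{0}'$ cofibrant, so axiom~(C4) supplies the required refinement $Y_{0}'\to Y_{0}\in\mathcal{COV}$. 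The homotopy lift $\Delta_{n}\otimes Y_{0}'\to X^{*}$ demanded by Definition~\ref{DEFHLP} should then be assembled from the partial boundary data together with the composite $\widetilde{P}\to P\to X^{*}$, the trivial cofibration $\widetilde{P}\to Y_{0}'$ providing the homotopy-theoretic slack.

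The main obstacle I anticipate is precisely this final step. The delicate point is that $\pi$ is only in $\Fib\cap\mathcal{W}_{loc}$, not in $\Fib\cap\mathcal{W}$, so strict lifting of $\partial\Delta_{n}\otimes Y_{0}'\hookrightarrow\Delta_{n}\otimes Y_{0}'$ against the pull-back of $\pi$ to $Y_{0}'$ is not available. The whole purpose of passing to $Y_{0}'\in\mathcal{COV}$ is to trade local triviality of $\pi$ for a globally trivial cofibrant replacement over $Y_{0}'$, and checking that this trade-off really yields a lift in the specific sense of Definition~\ref{DEFHLP} is where hypothesis~(1), property~$\mathbf{P}$, and axiom~(C4) must cooperate.
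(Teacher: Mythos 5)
Most of your cycle is fine and matches the paper: $1.\Leftrightarrow 3.$ via Theorem~\ref{SATZPROPEREXACTLOC}, $2.\Rightarrow 1.$ via Lemma~\ref{LEMMAPFIBCOFIBRANT2} plus Proposition~\ref{PROPEXACT}, $4.\Rightarrow 5.$ via Lemma~\ref{THELEMMA}, and $5.\Rightarrow 1.$ via Lemma~\ref{LEMMAWCOV},~3 (you label this last step ``$4.\Rightarrow 1.$'' but in fact use the equality of the two classes, i.e.\@ statement 5; harmless, since you have $4.\Rightarrow 5.$ already). The problem is $1.\Rightarrow 4.$, and the obstacle you flag at the end is a genuine gap, not a routine verification: your construction pulls back the wrong object. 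You form $P=X^{*}\times_{Y^{*}}(\Delta_n\otimes Y_0)$ and refine $Y_0$ by a cofibrant replacement of (a factorization of) $P\to Y_0$. But $P$ is the space of points of $X^{*}$ lying over $\Delta_n\otimes Y_0$, with no reference to the prescribed boundary map $\partial\Delta_n\otimes Y_0\to X^{*}$. Definition~\ref{DEFHLP} requires the \emph{upper} triangle to commute strictly, and nothing in your choice of $Y_0'$ ties the eventual map $\Delta_n\otimes Y_0'\to X^{*}$ to the given boundary datum; the ``homotopy-theoretic slack'' of $\widetilde{P}\to Y_0'$ only produces a map out of $Y_0'$ (not out of $\Delta_n\otimes Y_0'$) and cannot repair the missing boundary compatibility.

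The paper's proof fixes exactly this by pulling back the \emph{relative mapping space} instead: one considers the corner map
\[ \boxdot\Hom(\partial\Delta_n\to\Delta_n,\,f^{*})\colon\ \Hom(\Delta_n,X^{*})\ \longrightarrow\ \Hom(\partial\Delta_n,X^{*})\times_{\Hom(\partial\Delta_n,Y^{*})}\Hom(\Delta_n,Y^{*}), \]
which is a fibration (SM7) and lies in $\mathcal{W}_{loc}$ by Lemma~\ref{LEMMABOXPLUSPFIB} together with Lemma~\ref{LEMMAP1},~1 --- an input your argument never invokes and which does not follow from property $\mathbf{P}$ of $f^{*}$ alone. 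The test square is, by adjunction, a map from $Y_0$ into the codomain of this corner map; pulling the corner map back along it yields $\Box\to Y_0$ in $\Fib\cap\mathcal{W}_{loc}$ whose sections are precisely \emph{strict} solutions of the lifting problem rel boundary. A cofibrant replacement $Y_0'\to\Box$ then gives $Y_0'\to Y_0$ in $\mathcal{COV}$ by (C4), and the tautological map $Y_0'\to\Box\to\Hom(\Delta_n,X^{*})$ is the required lift, with the upper triangle commuting on the nose. So the skeleton of your step (property $\mathbf{P}$, cofibrant replacement, (C4)) is right, but it must be applied to $\boxdot\Hom(\partial\Delta_n\to\Delta_n,f^{*})$ rather than to $f^{*}$ itself; as written, the final step of your argument cannot be completed.
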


\begin{proof}
The implication $2. \Rightarrow 1.$ is Proposition~\ref{PROPEXACT} and $1. \Rightarrow 2.$ is clear. 

The equivalence $1. \Leftrightarrow 3.$ is Theorem~\ref{SATZPROPEREXACTLOC}.

$5. \Rightarrow 1.$ By Lemma~\ref{LEMMAWCOV}, 3.\@ $\mathcal{W}_{\mathcal{COV}}$ is stable under pull-back along fibrations. 

$1. \Rightarrow 4.$
Consider $Y \in \mathcal{COV}$ and $f \in \mathcal{W}_{loc}$. We may choose a fibrant replacement $f'$ of $f$ which is also a fibration.
Hence we have
a commutative square
\[ \xymatrix{
\partial \Delta_n \otimes Y   \ar[r]^{}  \ar[d] & A \ar[d]^{f'} \\
\Delta_n \otimes Y    \ar[r]^{} & B 
}\]
in which $f' \in \Fib \cap \mathcal{W}_{loc}$ and $A$ and $B$ are fibrant objects. We have to show that a local lifting exists. 
Consider the diagram with Cartesian squares
\[ \xymatrix{
Y' \ar[d]_{ \mathcal{W} \cap \Fib } \\ 
\Box \ar[r]  \ar[d]_{\mathcal{W}_{loc} \cap \Fib} & \Hom(\Delta_n, A) \ar[d] \\
Y \ar[r] & \Box \ar[r]^{}  \ar[d] & \Hom(\partial \Delta_n, A) \ar[d] \\
& \Hom( \Delta_n, B)  \ar[r]^{} &  \Hom(\partial \Delta_n, B)
}\]
in which the indicated vertical morphism is in $\mathcal{W}_{loc}$ by Lemma~\ref{LEMMABOXPLUSPFIB} and Lemma~\ref{LEMMAP1}, 1., and $Y' \rightarrow \Box$ is a cofibrant replacement. 
Putting things together we get a lift in
 \[ \xymatrix{
\partial \Delta_n \otimes Y'   \ar[r]^{} \ar[d] &  \partial \Delta_n \otimes Y   \ar[r]^{}  \ar[d] & A \ar[d]^f \\
\Delta_n \otimes Y'    \ar[r]^{} \ar@{.>}[rru]^{} &   \Delta_n \otimes Y \ar[r]^{} & B 
}\]
and the morphism $Y' \rightarrow Y$ is in $\mathcal{COV}$ by (C4).

$4. \Rightarrow 5.$ is Lemma~\ref{THELEMMA}.
\end{proof}

\begin{PAR}
Consider the case of the left Bousfield localization at the \v{C}ech covers  (cf.\@ \ref{EX1}). We may define the smallest class $\mathcal{COV}$ that contains
all representable constant presheaves and inductively all morphisms in $\Fib \cap \mathcal{W}_{loc}$ between cofibrants object whose target is already an object in $\mathcal{COV}$, i.e.\@ the smallest class satisfying (C1--4). 
Theorem~\ref{MAINTHEOREMEXACT}, together with Corollary~\ref{COREXACT}, shows that $\mathcal{W}_{loc} = \mathcal{W}_{\mathcal{COV}}$ in this case.
However, the characterization is self-referential and thus gives no concrete description of $\mathcal{W}_{loc}$.
\end{PAR}

\begin{PAR}
Consider the case of the left Bousfield localization at all hypercovers (cf.\@ \ref{EX2}). In this case, we may define the smallest class $\mathcal{COV}$ that contains
all representable constant presheaves and inductively all morphisms in $\Fib \cap \mathcal{W}_{loc}$ between cofibrants object whose target is already an object in $\mathcal{COV}$, i.e.\@ the smallest class satisfying (C1--4). 
It other words, $\mathcal{COV}$ contains the objects in $\mathcal{S}$, their hypercovers (which are also global fibrations) and refinements between those. 
Theorem~\ref{MAINTHEOREMEXACT}, together with Corollary~\ref{COREXACT}, shows that $\mathcal{W}_{loc} = \mathcal{W}_{\mathcal{COV}}$ in this case.
Alternatively one can enlarge $\mathcal{COV}$ to contain {\em all} hypercovers because this class still satisfies (C3) and (C4). 

However, $\mathcal{W}_{loc}$ may be also be described simply as $\mathcal{W}_{\mathcal{C}}$ (still by Definition~\ref{DEFWCOV}) where $\mathcal{C}$ is the subcategory with morphisms $\coprod h_{V_i} \rightarrow \coprod h_{U_j}$ induced by refinements of coverings  $\{U_i \rightarrow X\}$ and  $\{V_j \rightarrow X\}$  of $X \in \mathcal{S}$  (of course this class does not satisfy (C3--4)). This is well-known (cf.\@ \cite{Jar87}) and will not be reproven here. 
\end{PAR}

\appendix

\section{Local homotopy lifting}

\begin{PAR}
Let $\mathcal{S}$ be a small category and fix the projective model structure on $\mathcal{SET}^{\mathcal{S}^{\op} \times \Delta^{\op}}$ (simplicial presheaves). 
\end{PAR}

\begin{PAR}
Fix a (non-full) subcategory $\mathcal{COV}$ in $\mathcal{SET}^{\mathcal{S}^{\op} \times \Delta^{\op}}$ such that the objects 
are cofibrant in the projective model structure (which are in particular degree-wise coproducts of retracts of representables). Furthermore assume that $\mathcal{COV}$ contains all constant representable presheaves, i.e.\@ assume axioms (C1) and (C2) of \ref{PARAXIOMS}. Axioms (C3) and (C4) will not play any role in this appendix. 
\end{PAR}

\begin{BEISPIEL}
Examples (for $\mathcal{S}$ being equipped with a Grothendieck pre-topology): 
\begin{enumerate}
\item Hypercovers/bounded hypercovers of varying $X \in \mathcal{S}$ and their refinements;
\item Hypercovers of varying $X \in \mathcal{S}$ which are also \v{C}ech weak equivalences and their refinements;
\item Morphisms of the form $\coprod h_{V_i} \rightarrow \coprod h_{U_j}$ for usual refinements of coverings $\{V_i\} \rightarrow\{U_j\} \rightarrow X$.
\end{enumerate}
\end{BEISPIEL}

\comment{One could also discuss the stability of cofibrations between fibrant objects with the local homotopy lifting property. It could be that those are closed under transfinite compositions if we regard only normal coverings. But still not clear whether this is closed under all operations and hence all trivial cofibrations have that property. How can this be shown otherwise? In DHI the argument uses that $\mathcal{W}_{\mathcal{COV}'}$ formed w.r.t.\@ normal covers defines the structure in a model category in particular $\Cof \cap \mathcal{W}_{\mathcal{COF}'}$ has the usual closure properties. From this, since hypercovers are in $\mathcal{W}_{\mathcal{COV}'}$ they satisfy the usual closure properties. }

\comment{Maybe could use a Lemma that local lifting w.r.t.\@ usual covers and hypercovers is equivalent. Cech covers are somehow different... because the induction does not
go through. Lifting w.r.t.\@ bounded hypercovers seems weaker though. }

\begin{DEF}\label{DEFLOCFIB}
We say that a morphism $f$ is a local fibration if in every square
\[ \xymatrix{
\Lambda_{n,k} \otimes X  \ar[r]^{}  \ar[d] & A \ar[d]^f \\
\Delta_n \otimes X    \ar[r]^{} & B 
}\]
in which $X$ is an object in $\mathcal{COV}$
there is a morphism $X' \rightarrow X$ in $\mathcal{COV}$ and a morphism $h$ in the diagram
\[ \xymatrix{
 \Lambda_{n,k} \otimes  X'  \ar[r]^{} \ar[d] &  \Lambda_{n,k} \otimes X   \ar[r]^{}  \ar[d] & A \ar[d]^f \\
 \Delta_n \otimes X'  \ar[r]^{} \ar@{-->}[rru]^h &  \Delta_n \otimes X   \ar[r]^{} & B 
}\]
making the upper and lower triangle commute. 
\end{DEF}

Note that a global fibration is in particular a local fibration. 

\begin{DEF}\label{DEFHLP}
We say that a morphism $f$ has the local (homotopy) lifting property if in every square
\[ \xymatrix{
\partial \Delta_n \otimes X   \ar[r]^{}  \ar[d] & A \ar[d]^f \\
\Delta_n \otimes X  \ar[r]^{} & B 
}\]
where $X$ is an object in $\mathcal{COV}$
there is a morphism $X' \rightarrow X$ in $\mathcal{COV}$ and a morphism $h$ in the diagram
\[ \xymatrix{
  \partial \Delta_n \otimes X' \ar[r]^{} \ar[d] &   \partial \Delta_n \otimes X  \ar[r]^{}  \ar[d] & A \ar[d]^f \\
  \Delta_n \otimes X' \ar[r]^{} \ar@{-->}[rru]^h &  \Delta_n \otimes X   \ar[r]^{} & B 
}\]
making the upper triangle commute and making the lower triangle commute (resp.\@ commute up to left homotopy). 
\end{DEF}

\comment{
\begin{LEMMA}
If trivial fibrations are in $\mathcal{COV}$ then 
in the definition of local (homotopy) lifting one can also assume $X$ and $X'$ cofibrant. 
\end{LEMMA}
\begin{proof}
Consider a square
\[ \xymatrix{
Y \otimes \partial \Delta_n \ar[r]^{}  \ar[d] & A \ar[d]^f \\
Y \otimes \Delta_n  \ar[r]^{} & B 
}\]
and a cofibrant replacement $Y \rightarrow X$ (which is a trivial fibration). Then the  outer square in 
\[ \xymatrix{
Y \otimes \partial \Delta_n \ar[r]^{} \ar[d] &  X \otimes \partial \Delta_n \ar[r]^{}  \ar[d] & A \ar[d]^f \\
Y \otimes \Delta_n  \ar[r]^{} & X \otimes \Delta_n  \ar[r]^{} & B 
}\]
satisfies the assumption that $Y$ is cofibrant and thus a local lift exists. 
If trivial fibrations qualify as covers, we may assume that the cover is cofibrant as well. 
\end{proof}

From now on we assume that $\mathcal{COV}$ consists of morphisms between cofibrant objects. If this is not the case but $\mathcal{COV}$ contains the trivial fibrations
we may w.l.o.g.\@ downsize $\mathcal{COV}$ such that it is true. 

}

\begin{LEMMA}\label{LEMMAFINITESSET1}
 If $f$ has the (homotopy) local lifting property and $K \hookrightarrow L$ is an inclusion of finite simplicial sets then also each square
\[ \xymatrix{
K \otimes Y \ar[r]^{}  \ar[d] & A \ar[d]^f \\
L \otimes Y  \ar[r]^{} & B 
}\]
with $Y \in \mathcal{COV}$ has a local (homotopy) lifting in the obvious sense. For the homotopy case assume that $A$ and $B$ are locally fibrant. 

If $f$ has the local lifting property then also
\[ \boxdot \Hom(K \hookrightarrow L, f) \]
has it.  
\end{LEMMA}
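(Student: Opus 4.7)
The plan is to reduce both assertions of the lemma to the hypothesis on boundary inclusions $\partial \Delta_n \hookrightarrow \Delta_n$ by an induction along a cell decomposition of $K \hookrightarrow L$. For the first assertion, since $K \hookrightarrow L$ is a monomorphism of finite simplicial sets it factors as $K = L_0 \subset L_1 \subset \cdots \subset L_m = L$ in which each $L_{i+1}$ is obtained from $L_i$ by a pushout along a single boundary inclusion $\partial \Delta_{n_i} \hookrightarrow \Delta_{n_i}$. I induct on $i$, maintaining a morphism $Y_i \to Y$ in $\mathcal{COV}$ together with a partial lift $h_i : L_i \otimes Y_i \to A$ strictly extending the pulled-back data on $K \otimes Y_i$.

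In the inductive step, restriction to the attaching simplex produces the square
\[ \xymatrix{ \partial \Delta_{n_i} \otimes Y_i \ar[r] \ar[d] & A \ar[d]^f \\ \Delta_{n_i} \otimes Y_i \ar[r] & B } \]
to which the local lifting hypothesis for $f$ applies, yielding $Y_{i+1} \to Y_i$ in $\mathcal{COV}$ (so $Y_{i+1} \to Y$ is in $\mathcal{COV}$ by closure of the subcategory under composition) together with a diagonal filler. This filler glues with the pullback of $h_i$ over the shared boundary $\partial \Delta_{n_i} \otimes Y_{i+1}$ to extend $h_i$ to a lift $h_{i+1}$ on $L_{i+1} \otimes Y_{i+1}$; after $m$ steps I obtain the required $Y' := Y_m \to Y$ in $\mathcal{COV}$ and the full lift on $L \otimes Y'$.

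The main obstacle is the homotopy variant of this induction: after step $i$ the lower triangle only commutes up to left homotopy, so the square attached at step $i+1$ is no longer strictly commutative on the nose, while the local lifting hypothesis is only stated for strictly commutative squares. This is where the hypothesis that $A$ and $B$ are locally fibrant enters. My intention is, after a further refinement in $\mathcal{COV}$, to rectify the accumulated homotopy in $B$ along $f$ by a Kan-style local lifting of homotopies, modifying $h_i$ within its left-homotopy class so that the boundary of the next cell maps strictly into the correct fiber. Local fibrancy of $A$ and $B$ is precisely what permits the composition and inversion of local homotopies needed for this rectification, after which the strict induction applies verbatim.

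The second assertion follows from the first by adjunction. A lifting problem for $\boxdot \Hom(K \hookrightarrow L, f)$ against $\partial \Delta_n \otimes Y \hookrightarrow \Delta_n \otimes Y$ with $Y \in \mathcal{COV}$ transposes under the tensor-cotensor adjunction, using the identification $K \otimes (L \otimes Y) = (K \times L) \otimes Y$, to a lifting problem for $f$ against the monomorphism
\[ \bigl( (\partial \Delta_n \times L) \cup_{\partial \Delta_n \times K} (\Delta_n \times K) \bigr) \otimes Y \hookrightarrow (\Delta_n \times L) \otimes Y \]
of a finite simplicial set tensored with $Y$. The first assertion of the lemma then produces a local lift after a refinement $Y' \to Y$ in $\mathcal{COV}$, and transposing back gives the desired local lift for $\boxdot \Hom(K \hookrightarrow L, f)$.
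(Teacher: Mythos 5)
Your proof follows essentially the same route as the paper: the first assertion by induction over a cell decomposition of $K \hookrightarrow L$ into pushouts of boundary inclusions (composing the successive refinements in $\mathcal{COV}$), and the second by transposing under the tensor--cotensor adjunction to a lifting problem against the pushout--product $(K \hookrightarrow L) \boxplus (\partial\Delta_n \rightarrow \Delta_n)$, which is again an inclusion of finite simplicial sets. Your discussion of the homotopy case --- rectifying the accumulated homotopies using local fibrancy of $A$ and $B$ --- is only sketched, but the paper's own proof is no more detailed on this point, so the proposal is a faithful (indeed slightly more explicit) rendering of the intended argument.
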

\begin{proof}
Since $K \hookrightarrow L$ is a finite composition of push-outs of the form 
$\partial \Delta_n \rightarrow \Delta_n$, the first assertion follows by induction. 

For the second assertion note that the local lifting property for $\boxdot \Hom(K \hookrightarrow L, f)$ is equivalent to the existence of a local lifting in the diagram
\[ \xymatrix{
 (L \times \partial \Delta_n \cup K \times \Delta_n ) \otimes  Y \ar[r]^{}  \ar[d] & A \ar[d]^f \\
 (L \times \Delta_n) \otimes Y   \ar[r]^{} & B 
}\]
If $K \hookrightarrow L$ is an inclusion of finite simplicial sets then also $(K \hookrightarrow L) \boxplus  (\partial \Delta_n \rightarrow \Delta_n)$ is. 
\end{proof}

\begin{LEMMA}\label{LEMMAFINITESSET2}
If $f$ is a local fibration and $K \hookrightarrow L$ is a strong anodyne extension of finite simplicial sets then each square
\[ \xymatrix{
K \otimes Y \ar[r]^{}  \ar[d] & A \ar[d]^f \\
L \otimes Y  \ar[r]^{} & B 
}\]
with $Y \in \mathcal{COV}$  has a local lifting. 

If $f$ is a local fibration and $K \hookrightarrow L$ is a strong anodyne extension of finite simplicial sets
\[ \boxdot \Hom(K \hookrightarrow L, f) \]
is a local fibration. 
\end{LEMMA}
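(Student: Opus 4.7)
The plan is to mirror the proof of Lemma~\ref{LEMMAFINITESSET1}, with boundary inclusions replaced by horn inclusions and the local homotopy lifting property replaced by the local fibration property. A strong anodyne extension of finite simplicial sets admits a finite filtration by pushouts of horn inclusions, so the inductive skeleton is identical; the only additional care needed is to track the successive refinements of $Y$ that are forced by each horn lifting step.

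For the first assertion, fix a decomposition $K = K_0 \hookrightarrow K_1 \hookrightarrow \cdots \hookrightarrow K_m = L$ in which each $K_i \hookrightarrow K_{i+1}$ is a pushout of some horn inclusion $\Lambda_{n_i,k_i} \hookrightarrow \Delta_{n_i}$ along a map $\Lambda_{n_i,k_i} \to K_i$. I will inductively construct a chain $Y_m \to \cdots \to Y_1 \to Y_0 = Y$ of morphisms in $\mathcal{COV}$ together with a lift $K_i \otimes Y_i \to A$ compatible with the given $L \otimes Y \to B$. At stage $i$, restricting the already-constructed lift along the attaching map produces a horn lifting problem
\[ \xymatrix{ \Lambda_{n_i,k_i} \otimes Y_i \ar[r] \ar[d] & A \ar[d]^f \\ \Delta_{n_i} \otimes Y_i \ar[r] & B } \]
which by Definition~\ref{DEFLOCFIB} is solved after passage to some $Y_{i+1} \to Y_i$ in $\mathcal{COV}$. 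The resulting map $\Delta_{n_i} \otimes Y_{i+1} \to A$ glues with the pulled-back lift $K_i \otimes Y_{i+1} \to A$ along the pushout square defining $K_{i+1}$, since the two agree on $\Lambda_{n_i,k_i} \otimes Y_{i+1}$ by the strict commutativity of the upper triangle in the definition of a local fibration. Setting $Y' := Y_m$, the composite $Y' \to Y$ lies in $\mathcal{COV}$ (closure under composition in the subcategory $\mathcal{COV}$) and carries the desired lift $L \otimes Y' \to A$.

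For the second assertion I reduce to the first by the same adjunction trick as in Lemma~\ref{LEMMAFINITESSET1}. The local fibration property of $\boxdot \Hom(K \hookrightarrow L, f)$ is equivalent, after unpacking $\otimes \dashv \Hom$, to the existence of local lifts of $f$ in every diagram
\[ \xymatrix{ \bigl( (L \times \Lambda_{n,k}) \cup (K \times \Delta_n) \bigr) \otimes Y \ar[r] \ar[d] & A \ar[d]^f \\ (L \times \Delta_n) \otimes Y \ar[r] & B } \]
i.e.\ to local lifts against $(K \hookrightarrow L) \boxplus (\Lambda_{n,k} \hookrightarrow \Delta_n)$. The pushout-product of a strong anodyne extension with a horn inclusion is again a strong anodyne extension of finite simplicial sets, by the standard cell-by-cell filtration, so the first assertion delivers the required local lift.

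The main obstacle is purely notational, namely ensuring that the refinement bookkeeping goes through: at each inductive step the partial lift on $K_i \otimes Y_i$ must restrict coherently to $K_i \otimes Y_{i+1}$ and agree with the horn lift on the overlap $\Lambda_{n_i,k_i} \otimes Y_{i+1}$. This coherence is automatic from the explicit form of the local fibration property, so no genuinely new ingredient beyond Definition~\ref{DEFLOCFIB} and closure of $\mathcal{COV}$ under composition is needed.
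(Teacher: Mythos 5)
Your proof is correct and follows essentially the same route as the paper, which simply cites \cite[1.3, 1.4]{Jar87} for the two ingredients you spell out: the induction over a finite filtration of $K \hookrightarrow L$ by pushouts of horn inclusions (with the covering refined at each stage), and the fact that the pushout-product of a strong anodyne extension with a horn inclusion is again a strong anodyne extension of finite simplicial sets. Nothing further is needed.
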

\begin{proof}
The first assertion follows by the same proof as \cite[1.4]{Jar87}.
Furthermore, if $K \hookrightarrow L$ is a a strong anodyne extension of finite simplicial sets extension then also $(K \hookrightarrow L) \boxplus  (\Lambda_{n,k} \rightarrow \Delta_n)$ is by \cite[1.3]{Jar87}. 
\end{proof}

\comment{
\begin{LEMMA}\label{LEMMAW}
A local fibration which is a weak equivalence has the local lifting property.
\end{LEMMA}
\begin{proof}

First: A trivial cofibration $f: X \rightarrow Y$ between locally fibrant objects has the homotopy local lifting property: 
Consider
\[ \boxdot \Hom(\partial \Delta_n \rightarrow \Delta_n , X \rightarrow Y) \]

\[ \xymatrix{
 \Hom(\Delta_n,X) \ar[d] \\
 \Box \ar[r]^{}  \ar[d] & \Hom(\partial \Delta_n, X) \ar[d] \\
 \Hom( \Delta_n, Y)  \ar[r]^{} &  \Hom(\partial \Delta_n, Y)
}\]

Hence it is a weak equivalence if those are stable under pull-back along local fibrations.

NEW: 
Between fibrant objects a weak equivalence has the homotopy lifting property (w/o) local.
$X \rightarrow Y \rightarrow Y'$ where $Y'$ is fibrant and  

\end{proof}
}

\begin{LEMMA}\label{LEMMALOCFIBHOMOTOPY}
For a local fibration local homotopy lifting and local lifting are equivalent
\end{LEMMA}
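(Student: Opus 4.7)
The implication from local lifting to local homotopy lifting is immediate, since strict commutativity is a special case of commutativity up to left homotopy. For the converse, suppose $f$ is a local fibration with the local homotopy lifting property, and consider a square
\[
\xymatrix{\partial \Delta_n \otimes X \ar[r]^{\alpha} \ar[d] & A \ar[d]^f \\ \Delta_n \otimes X \ar[r]^{\beta} & B}
\]
with $X \in \mathcal{COV}$. Applying local homotopy lifting, I obtain $X' \to X$ in $\mathcal{COV}$, a map $h: \Delta_n \otimes X' \to A$ restricting strictly to $\alpha|_{X'}$ on $\partial \Delta_n \otimes X'$, and a left homotopy $H: \Delta_1 \otimes \Delta_n \otimes X' \to B$ from $fh$ to $\beta|_{X'}$ (realized concretely, since $\Delta_n \otimes X'$ is cofibrant).

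The strategy is to lift $H$ through $f$ so that the endpoint at time $1$ rectifies $h$ to a strict local lift. The key tool is Lemma~\ref{LEMMAFINITESSET2} applied to the strong anodyne extension
\[
M := \bigl(\{0\} \otimes \Delta_n\bigr) \cup \bigl(\Delta_1 \otimes \partial \Delta_n\bigr) \hookrightarrow \Delta_1 \otimes \Delta_n,
\]
which arises as the $\boxplus$-product of the anodyne inclusion $\{0\} \hookrightarrow \Delta_1$ with the cofibration $\partial \Delta_n \hookrightarrow \Delta_n$. Feeding in $h$ on $\{0\} \otimes \Delta_n$ and the constant map $\alpha|_{X'} \circ \mathrm{pr}$ on $\Delta_1 \otimes \partial \Delta_n$ as the partial lift, a local lift $\tilde H: \Delta_1 \otimes \Delta_n \otimes X'' \to A$ of $H|_{X''}$ would yield $h' := \tilde H|_{\{1\}}$, which by construction satisfies $fh' = \beta|_{X''}$ strictly and $h'|_{\partial \Delta_n \otimes X''} = \alpha|_{X''}$ strictly.

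The main obstacle is that this setup produces a \emph{valid} commutative square only when the restriction $H|_{\Delta_1 \otimes \partial \Delta_n \otimes X'}$ is the constant homotopy at $f\alpha|_{X'}$; in general, it is merely some self-homotopy of $f\alpha|_{X'}$. Reducing to the constant case is the technical heart of the argument. I would first use Lemma~\ref{LEMMAFINITESSET2} applied to the strong anodyne extension $\{0\} \otimes \partial \Delta_n \hookrightarrow \Delta_1 \otimes \partial \Delta_n$ to lift this boundary self-homotopy to a path $\rho$ in $A$ starting at $\alpha|_{X''}$, then iterate the local fibration property via a further $\Delta_2$-filler (again provided by Lemma~\ref{LEMMAFINITESSET2} applied to horn inclusions $\Lambda^2_k \hookrightarrow \Delta_2$ smashed with $\partial \Delta_n$) to effectively cancel $\rho$ against its reverse. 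After sufficient refinement in $\mathcal{COV}$, this replaces $H$ by a homotopy whose restriction to $\Delta_1 \otimes \partial \Delta_n$ is constant. Once this rectification is in place, the lift against $M \hookrightarrow \Delta_1 \otimes \Delta_n$ preserves the boundary strictly, and $\tilde H|_{\{1\}}$ furnishes the desired strict local lift.
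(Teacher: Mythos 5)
Your core argument is exactly the paper's: feed the partial lift $(h,\mathrm{const}_{\alpha})$ defined on $(\{0\}\times\Delta_n\cup\Delta_1\times\partial\Delta_n)\otimes X'$ and the homotopy $H$ into Lemma~\ref{LEMMAFINITESSET2} for the strong anodyne extension $\{0\}\times\Delta_n\cup\Delta_1\times\partial\Delta_n\hookrightarrow\Delta_1\times\Delta_n$, and evaluate the resulting local lift at time $1$. Where you diverge is the preliminary ``rectification'' of $H$ on $\Delta_1\otimes\partial\Delta_n$. The paper does not perform this step: its partial lift on the boundary is the constant homotopy, which maps compatibly to $B$ only when the left homotopy of Definition~\ref{DEFHLP} is taken rel $\partial\Delta_n$, and that is evidently the intended reading (it is the convention of Dugger--Hollander--Isaksen, and it is what the homotopies constructed in Lemma~\ref{LEMMA2OUTOF3} actually satisfy). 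Under that reading your extra step is superfluous and your proof coincides with the paper's. Under the literal reading of Definition~\ref{DEFHLP} you have correctly spotted that the square handed to Lemma~\ref{LEMMAFINITESSET2} need not commute, but your sketched repair does not yet close the gap: after lifting the boundary self-homotopy $\sigma$ through $f$ to a path $\rho$ in $A$ with $\rho_0=\alpha|_{X''}$, the endpoint satisfies only $f\rho_1=f\alpha|_{X''}$, not $\rho_1=\alpha|_{X''}$, so ``cancelling $\rho$ against its reverse'' does not obviously produce a rectified homotopy together with a lift that still restricts to $\alpha$ \emph{strictly} on $\partial\Delta_n$ (strictness there is what the anodyne-extension step needs). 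You should either adopt the rel-boundary reading explicitly, or carry the rectification out in detail, modifying $h$ on the boundary along $\rho$ as part of the process.
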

\begin{proof}
Consider a homotopy local lifting such that the lower triangle commutes via the homotopy $\mu: fh \Rightarrow a\iota_1$:
\[ \xymatrix{
\partial \Delta_n \otimes W'   \ar[r]^{}  \ar[d] &\partial \Delta_n \otimes W   \ar[r]^{}  \ar[d] & X \ar[d]^f \\
 \Delta_n \otimes W'  \ar@{-->}[rru]^h \ar[r]_-{\iota_1} & \Delta_n \otimes W  \ar[r]_-{a} & Y
}\]
Then consider

\[ \xymatrix{
  (\Delta_n \times \{0\} \cup \partial \Delta_n \times \Delta_1) \otimes W' \ar[rr]^-{h, c_{a}\iota_1} \ar[d]  & & X \ar[d]^f  \\
 (\Delta_n \times \Delta_1) \otimes W'  \ar[rr]^-{\mu} & & Y
 }\]
Since $f$ is a local fibration, by Lemma~\ref{LEMMAFINITESSET2}, there is $\iota_2: W'' \rightarrow W'$ in $\mathcal{COV}$ and a lift  
\[ \xymatrix{
&  (\Delta_n \times \{0\} \cup \partial \Delta_n \times \Delta_1) \otimes W'' \ar[rr]^-{h, c_{a}\iota_1\iota_2} \ar[d]  & & X \ar[d]^f  \\
(\Delta_n \times \{1\}) \otimes W'' \ar[r]^{e_0} & (\Delta_n \times \Delta_1) \otimes W''  \ar@{-->}[rru] \ar[rr]^-{\mu \iota_2} & & Y
 }\]
The composition with the left horizontal morphism $e_0$ is then the lift which makes
 everything commute on the nose. 
\end{proof}

\begin{LEMMA}\label{LEMMAPULLBACK}
Consider a pull-back square
\[ \xymatrix{ X' \ar[r]  \ar[d]_{f'} & X  \ar[d]^f \\
Y' \ar[r] & Y} \]
If $f$ has the local lifting property (respectively is a local fibration) then also $f'$ has the local lifting property (respectively is a local fibration). 
\end{LEMMA}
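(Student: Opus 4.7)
The plan is to treat both assertions uniformly, since the only difference between ``local fibration'' and ``local lifting property'' is the indexing inclusion ($\Lambda_{n,k} \hookrightarrow \Delta_n$ in the former, $\partial \Delta_n \hookrightarrow \Delta_n$ in the latter). In either case, the lower triangle in the relevant lifting diagram is required to commute on the nose, so the argument is a formal manipulation of the universal property of the pullback square.

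I would start from a lifting problem for $f'$: a commutative square with vertical left leg $K \otimes Z \to \Delta_n \otimes Z$ (where $K$ is $\partial\Delta_n$ or $\Lambda_{n,k}$), $Z \in \mathcal{COV}$, top map $u: K \otimes Z \to X'$, and bottom map $v: \Delta_n \otimes Z \to Y'$ with $f'u = v|_{K \otimes Z}$. Post-composing with the pullback projections $p: X' \to X$ and $q: Y' \to Y$ yields a lifting problem for $f$ with top $pu$ and bottom $qv$. By the hypothesis on $f$ there exist a morphism $\iota: Z' \to Z$ in $\mathcal{COV}$ and a lift $h: \Delta_n \otimes Z' \to X$ making the upper triangle commute (i.e.\ $h|_{K \otimes Z'} = pu \circ (K \otimes \iota)$) and the lower triangle commute (i.e.\ $fh = qv \circ (\Delta_n \otimes \iota)$).

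Now I would appeal to the universal property of the pullback $X' = X \times_Y Y'$ applied to the pair $(h,\, v \circ (\Delta_n \otimes \iota))$: the identity $f h = q \circ (v\circ(\Delta_n \otimes \iota))$ established above is exactly the compatibility needed to produce a unique morphism $h': \Delta_n \otimes Z' \to X'$ with $p h' = h$ and $f' h' = v \circ (\Delta_n \otimes \iota)$. The latter equation is already the commutativity of the lower triangle for $f'$. For the upper triangle, one checks that $h'|_{K \otimes Z'}$ and $u \circ (K \otimes \iota)$ have the same composites with both $p$ and $f'$, hence agree by the uniqueness clause of the pullback; this is where the on-the-nose commutativity from the hypothesis on $f$ is crucial.

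I do not anticipate any genuine obstacle: the argument is entirely formal once the notation for the induced maps is set up, and works verbatim for both ``local fibration'' (with $K = \Lambda_{n,k}$) and ``local lifting property'' (with $K = \partial \Delta_n$). The only point to be wary of is that one must use the non-homotopy version of the hypothesis, since the pullback universal property demands strict equality $fh = q v (\Delta_n \otimes \iota)$; if one tried to weaken the lower-triangle commutativity to ``up to left homotopy'', one would need to lift the homotopy into $Y'$, which is not automatic and would require invoking Lemma~\ref{LEMMALOCFIBHOMOTOPY}.
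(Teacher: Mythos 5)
Your argument is correct and is exactly the formal pullback-universal-property argument the paper has in mind (its own proof is just ``Obvious''). Your closing remark about why the strict, non-homotopy version of the hypothesis is needed is also accurate and consistent with the lemma only being stated for the strict local lifting property and for local fibrations.
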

\begin{proof}Obvious.
\end{proof}

\begin{LEMMA}\label{LEMMA2OUTOF3}
\begin{enumerate}
\item If $f$ and $g$ have the (homotopy) lifting property then also $gf$ has it.
\item if $gf$ and $f$ have the (homotopy) lifting property then also $g$ has it.
\item if $gf$ and $g$ have the (homotopy) lifting property then also $f$ has it.
\end{enumerate}

For the statements involving ``homotopy'' assume that $X, Y$ and $Z$ are locally fibrant. For assertion 3.\@ without ``homotopy'' assume that $f$ is a local fibration.
\end{LEMMA}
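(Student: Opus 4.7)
The plan is to handle the three parts in sequence, reducing each to successive applications of the local (homotopy) lifting property along inclusions of finite simplicial sets (Lemma~\ref{LEMMAFINITESSET1}) and, where a lift needs to be rectified, to strong anodyne extensions via local fibrancy (Lemma~\ref{LEMMAFINITESSET2}). For part~1, given a square $(\alpha,\beta)$ for $gf$, I apply the local lifting of $g$ to $(f\alpha, \beta)$ to get a partial lift $h_1 : \Delta_n \otimes W_1 \to Y$. Since $h_1|_{\partial} = f\alpha|_{W_1}$ strictly, the square $(\alpha|_{W_1}, h_1)$ commutes on the upper triangle, and the local lifting of $f$ yields the desired $h_2 : \Delta_n \otimes W_2 \to X$. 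In the homotopy variant the two lower-triangle homotopies in $Z$ are composed by horn filling along $\Lambda_{2,1} \hookrightarrow \Delta_2$, using local fibrancy of $Z$.

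For part~2, I first use Lemma~\ref{LEMMAFINITESSET1} with $\emptyset \hookrightarrow \partial \Delta_n$ and the local lifting of $f$ to lift $\alpha$ to $\widetilde\alpha : \partial \Delta_n \otimes W_1 \to X$. In the strict case $f\widetilde\alpha = \alpha|_{W_1}$, so $(\widetilde\alpha, \beta|_{W_1})$ is a strictly commuting square into $gf$, the local lifting of $gf$ yields $h : \Delta_n \otimes W_2 \to X$, and $fh$ solves the original problem. In the homotopy variant only $f\widetilde\alpha \simeq \alpha|_{W_1}$ via some boundary homotopy $K_1$ in $Y$. To restore strict commutativity on the upper triangle into $gf$, I extend $gK_1$ along the strong anodyne inclusion $(\Delta_n \times \{1\}) \cup (\partial \Delta_n \times \Delta_1) \hookrightarrow \Delta_n \times \Delta_1$ to a homotopy in $Z$ ending at $\beta|_{W_2}$, using local fibrancy of $Z$; its $\{0\}$-face is a rectified $\beta'$ with $\beta'|_{\partial} = gf\widetilde\alpha|_{W_2}$. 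After obtaining $h_0$ from the homotopy lifting of $gf$ applied to $(\widetilde\alpha, \beta')$, I use local fibrancy of $Y$ dually to extend $K_1$ along $(\Delta_n \times \{0\}) \cup (\partial \Delta_n \times \Delta_1) \hookrightarrow \Delta_n \times \Delta_1$ to a homotopy in $Y$ starting at $fh_0$; its $\{1\}$-face is the final lift, and the lower triangle is verified by composing the induced homotopies in $Z$ via horn filling.

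For part~3, I apply the local lifting of $gf$ to $(\alpha, g\beta)$, obtaining $h_1 : \Delta_n \otimes W_1 \to X$ with $h_1|_\partial = \alpha|_{W_1}$. Then $fh_1$ and $\beta|_{W_1}$ agree with $f\alpha|_{W_1}$ on $\partial \Delta_n$. In the homotopy variant, the homotopy lifting of $g$ applied to the finite inclusion $\Delta_n \times \partial \Delta_1 \hookrightarrow \Delta_n \times \Delta_1$ produces a homotopy $H$ in $Y$ from $fh_1|_{W_2}$ to $\beta|_{W_2}$, exhibiting $h_1|_{W_2}$ as the required homotopy local lift. In the strict case $gfh_1 = g\beta|_{W_1}$, so the local lifting of $g$ applied to the finite inclusion $(\partial \Delta_n \times \Delta_1) \cup (\Delta_n \times \partial \Delta_1) \hookrightarrow \Delta_n \times \Delta_1$, with constant boundary $f\alpha|_{W_1}$ and constant bottom $g\beta|_{W_1}$, produces $H$ in $Y$ from $fh_1$ to $\beta|_{W_2}$ rel $\partial \Delta_n$; then the local-fibration hypothesis on $f$ together with Lemma~\ref{LEMMAFINITESSET2} applied to the strong anodyne inclusion $(\Delta_n \times \{0\}) \cup (\partial \Delta_n \times \Delta_1) \hookrightarrow \Delta_n \times \Delta_1$ lifts $H$ strictly through $f$, and its $\{1\}$-endpoint is the strict local lift.

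The main obstacle is the homotopy case of part~2, where the initial non-strict commutativity at the boundary must propagate through two successive homotopy extensions (first in $Z$ to rectify the bottom map, then in $Y$ to fix the upper triangle) before the accumulated homotopies can be composed in $Z$; this bookkeeping of covers $W \leftarrow W_1 \leftarrow W_2 \leftarrow \cdots$ in $\mathcal{COV}$ and of successive applications of Lemma~\ref{LEMMAFINITESSET2} is what forces the local fibrancy assumption on all three of $X$, $Y$, $Z$ in the homotopy case.
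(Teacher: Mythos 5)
Your proof is correct and follows essentially the same route as the paper's: the same order of applications of the lifting hypotheses in each part, the same reduction to finite inclusions via Lemma~\ref{LEMMAFINITESSET1} and to strong anodyne extensions via Lemma~\ref{LEMMAFINITESSET2}, and the same use of local fibrancy of $Z$ (resp.\@ of $f$ being a local fibration) to compose homotopies after refining the cover (resp.\@ to rectify the lift in part~3). If anything you are more careful than the paper in the homotopy case of part~2, where your final extension in $Y$ along $(\Delta_n\times\{0\})\cup(\partial\Delta_n\times\Delta_1)\hookrightarrow\Delta_n\times\Delta_1$ restores strict commutativity of the upper triangle --- a step the paper's written proof leaves implicit, since its candidate lift $fh'$ agrees with the given boundary datum only up to the homotopy coming from the initial lift through $f$.
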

\begin{proof}
1.
Consider a diagram 
\[ \xymatrix{ 
 & X \ar[d]^f \\
\partial \Delta_n \otimes W  \ar[ru]  \ar[d] & Y \ar[d]^g \\
\Delta_n \otimes W \ar[r]^a & Z  \\
} \]
Applying the assumption, we get a local lift
\[ \xymatrix{ 
 & & X \ar[d]^f \\
\partial \Delta_n \otimes W'  \ar[r] \ar[d] & \partial \Delta_n \otimes W  \ar[ru] \ar[r] \ar[d] & Y \ar[d]^g \\
\Delta_n \otimes W' \ar[r]^{\iota_1}  \ar@{-->}[rru]^h & \Delta_n \otimes W \ar[r]^a & Z  \\
} \]
and a local lift
\[ \xymatrix{ 
\partial \Delta_n \otimes W'' \ar[r]\ar[d]   & \partial \Delta_n \otimes W' \ar[r]   \ar[d] & X \ar[d]^f \\
\partial \Delta_n \otimes W'' \ar[r]^{\iota_2} \ar@{-->}[rru]^{h'}   & \Delta_n \otimes W' \ar[r]^h & Y  \\
} \]
with homotopies $ gfh' \Rightarrow g h \iota_2 \Rightarrow  a \iota_1 \iota_2$. The homotopies may be composed (refining the cover if necessary) because $Z$ is locally fibrant. 
If the homotopies are equalities then we do not have to assume anything.

2.
Consider a diagram 
\[ \xymatrix{ 
 & X \ar[d]^f \\
\partial \Delta_n \otimes W  \ar[r]  \ar[d] & Y \ar[d]^g \\
\Delta_n \otimes W \ar[r]_-a & Z  \\
} \]
By Lemma~\ref{LEMMAFINITESSET1} applied to the morphism $\emptyset \rightarrow \partial \Delta_n$, we get a lifting
\[ \xymatrix{ 
 & & X \ar[d]^f \\
\partial \Delta_n \otimes W'  \ar[d] \ar[r] \ar@{-->}[rru]^h & \partial \Delta_n \otimes W  \ar[r]  \ar[d] & Y \ar[d]^g \\
\Delta_n \otimes W' \ar[r]^{\iota_1} &  \Delta_n \otimes W \ar[r]^a & Z  \\
} \]
and a homotopy $\mu: gfh \Rightarrow a \iota_1$ defined on $\partial \Delta_n \otimes W'$. 

If this homotopy is not trivial, and $Z$ is locally fibrant, consider:

\[ \xymatrix{
  (\Delta_n \times \{1\} \cup \partial \Delta_n \times \Delta_1) \otimes W' \ar[rr]^-{a \iota_1, \mu} \ar[d]  & & Z  \\
 (\Delta_n \times \Delta_1) \otimes W'  & &
 }\]
By Lemma~\ref{LEMMAFINITESSET2}, there is $\iota_2: W'' \rightarrow W'$ in $\mathcal{COV}$ and a lift  
\[ \xymatrix{
&  (\Delta_n \times \{1\} \cup \partial \Delta_n \times \Delta_1) \otimes W'' \ar[rr]^-{a \iota_1 \iota_2, \mu \iota_2} \ar[d]  & & Z   \\
(\Delta_n \times \{0\}) \otimes W'' \ar[r]^{e_0} & (\Delta_n \times \Delta_1) \otimes W''  \ar@{-->}[rru]^{h'}  & &
 }\]

Define $a' := h' e_0$. We have a commutative diagram and a local homotopy lift 
\[ \xymatrix{ 
\partial \Delta_n \otimes W''' \ar[r] \ar[d]   & \partial \Delta_n \otimes W'' \ar[r]^-h  \ar[d] & X \ar[d]^{gf} \\
\partial \Delta_n \otimes W''' \ar[r]^{\iota_3} \ar@{-->}[rru]^{h'}   & \Delta_n \otimes W'' \ar[r]^-{a'} & Z  \\
} \]
In total we have homotopies: 
\[ g f h' \Rightarrow  a'  \iota_3 \Rightarrow    a  \iota_1 \iota_2 \iota_3. \]
Those may be composed refining the cover because $Z$ is locally fibrant. In case that they are identities we do not have to assume anything.

3.\@ 
Consider a diagram 
\[ \xymatrix{ 
 \partial \Delta_n \otimes W  \ar[r]  \ar[d] & X \ar[d]^f \\
\Delta_n \otimes W  \ar[r]_-a & Y \ar[d]^g \\
 & Z  \\
} \]
We get a lifting
\[ \xymatrix{ 
 \partial \Delta_n \otimes W'  \ar[r]  \ar[d] &\partial \Delta_n \otimes W  \ar[r]  \ar[d] & X \ar[d]^f \\
\Delta_n \otimes W'  \ar[r]^{\iota_1} \ar[rrd] \ar@{-->}[rru]^h & \Delta_n \otimes W  \ar[r]^-{a} & Y \ar[d]^g \\
 & & Z  \\
} \]
and a homotopy $\mu: gfh \Rightarrow ga\iota_1$. Then from the diagram

\comment{ 

If this is not trivial we assumed that $Z$ is locally fibrant and consider:

\[ \xymatrix{
  (\Delta_n \times \{1\} \cup \partial \Delta_n \times \Delta_1) \otimes W' \ar[rr]^-{g a \iota_1, \mu} \ar[d]  & & Z  \\
 (\Delta_n \times \Delta_1) \otimes W'  & &
 }\]
Using Lemma~\ref{LEMMAFINITESSET2}, there is $\iota_2: W'' \rightarrow W'$ in $\mathcal{COV}$ and a lift  
\[ \xymatrix{
&  (\Delta_n \times \{1\} \cup \partial \Delta_n \times \Delta_1) \otimes W'' \ar[rr]^-{g a \iota_1 \iota_2, \mu \iota_2} \ar[d]  & & Z   \\
(\Delta_n \times \{0\}) \otimes W'' \ar[r]^{e_0} & (\Delta_n \times \Delta_1) \otimes W''  \ar@{-->}[rru]^{h'}  & &
 }\]

and define $a' := h' e_0$. Then consider:

\[ \xymatrix{ 
(\partial \Delta_1 \times \Delta_n) \otimes W''' \ar[r] \ar[d]   & (\partial \Delta_1 \times \Delta_n) \otimes W'' \ar[rr]^-{ f h \iota_1 \iota_2, a \iota_1 \iota_2 }  \ar[d] &&  Y  \ar[d]^{g} \\
( \Delta_1 \times \Delta_n ) \otimes W''' \ar[r]^{\iota_3} \ar@{-->}[rrru]^{h''}   & ( \Delta_1 \times \Delta_n ) \otimes W'' \ar[rr]^-{h'}  & & Z  \\
} \]

we get a homotopy on $W'''$:
\[   f h \iota_1 \iota_2 \iota_3  \Rightarrow a \iota_1 \iota_2 \iota_3   \]

If $f$ is a local fibration, then this homotopy may be used to change $h$ such that the thing commutes on the nose (up to refining the cover). 

NEXT TRY: }
\[ \xymatrix{ 
(\partial \Delta_1 \times \Delta_n) \otimes W'' \ar[r] \ar[d]   & (\partial \Delta_1 \times \Delta_n) \otimes W' \ar[rr]^-{ f h  , a \iota_1 }  \ar[d] &&  Y  \ar[d]^{g} \\
( \Delta_1 \times \Delta_n ) \otimes W'' \ar[r]_{\iota_2} \ar@{-->}[rrru]^(.4){h'}   & ( \Delta_1 \times \Delta_n ) \otimes W' \ar[rr]_-{\mu}  & & Z  \\
} \]
we get a homotopy on $\Delta_n \otimes W''$:
\[   f h \iota_1 \iota_2  \Rightarrow a \iota_1 \iota_2.   \]
If $f$ is a local fibration, then as in Lemma~\ref{LEMMALOCFIBHOMOTOPY} --- up to refining the cover --- this homotopy may be used to change $h$ such that the diagram commutes on the nose. 
\end{proof}

\begin{LEMMA}\label{LEMMATFIBLIFTING}
Let $f, f'$ be trivial cofibrations and  $g, g'$ be (global) fibrations satisfying $gf = g'f'$. Then $g$ has the local lifting property if and only if $g'$ has the local lifting property.
\end{LEMMA}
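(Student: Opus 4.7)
The plan is to introduce a common intermediate fibration and reduce to comparing it with $g$ and $g'$ in turn. Since $f$ is a trivial cofibration, $g'$ a fibration and $gf = g'f'$, a lift produces $h \colon Y \to Y'$ with $hf = f'$ and $g'h = g$. Factor $h = pq$ with $q \colon Y \to \widetilde Y$ a trivial cofibration and $p \colon \widetilde Y \to Y'$ a fibration; by 2-out-of-3, $p$ is a trivial fibration. Setting $\widetilde g := g'p$, we have $\widetilde g q = g$ and $\widetilde g = g'p$. As a trivial fibration, $p$ has the strict right lifting property against all cofibrations, hence the local lifting property, so Lemma~\ref{LEMMA2OUTOF3} applied to $\widetilde g = g'p$ yields ``$g'$ has the local lifting property $\Leftrightarrow \widetilde g$ has the local lifting property''. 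It remains to prove the analogous statement for $g$ and $\widetilde g$.

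Solving $q \in \Cof \cap \mathcal{W}$ against $g \in \Fib$ in the square with top $\mathrm{id}_Y$ and bottom $\widetilde g$ gives a retraction $r \colon \widetilde Y \to Y$ with $rq = \mathrm{id}_Y$ and $gr = \widetilde g$. The direction ``$\widetilde g$ has local lifting $\Rightarrow g$ has it'' is immediate: given a $g$-square with top $u$, compose with $q$ to get a $\widetilde g$-square, take a local lift $\widetilde\gamma$, and form $r\widetilde\gamma$; its boundary is $u$ since $rq = \mathrm{id}_Y$. For the converse, the simplicial structure supplies a homotopy $H \colon \widetilde Y \otimes \Delta_1 \to \widetilde Y$ over $Z$ from $qr$ to $\mathrm{id}_{\widetilde Y}$ that is constant along $q(Y)$: solve
\[
\xymatrix{
 \widetilde Y \otimes \partial\Delta_1 \cup_{Y \otimes \partial\Delta_1} Y \otimes \Delta_1 \ar[r]^-{(qr,\, \mathrm{id},\, q\cdot \mathrm{pr})} \ar[d] & \widetilde Y \ar[d]^{\widetilde g} \\
 \widetilde Y \otimes \Delta_1 \ar[r]_-{\widetilde g \cdot \mathrm{pr}} & Z
}
\]
in which the left vertical is $q \boxplus (\partial\Delta_1 \hookrightarrow \Delta_1)$, a trivial cofibration by the simplicial model-category axioms.

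Assume $g$ has local lifting and fix a $\widetilde g$-square with data $(u, v)$. The composite $(ru, v)$ is a $g$-square; its local lift gives $W' \to W$ in $\mathcal{COV}$ and $\gamma \colon \Delta_n \otimes W' \to Y$ with $\gamma|_\partial = ru|_{W'}$ and $g\gamma = v|_{W'}$. The candidate $q\gamma$ satisfies $\widetilde g(q\gamma) = v|_{W'}$, but has boundary $qru|_{W'}$ rather than $u|_{W'}$. Combine $q\gamma$ on $\Delta_n \otimes W' \otimes \{0\}$ with the boundary homotopy $H \circ (u|_{W'} \otimes \Delta_1)$ on $\partial\Delta_n \otimes W' \otimes \Delta_1$ (running from $qru|_{W'}$ to $u|_{W'}$) into a single map from the horn $(\Delta_n \times \{0\} \cup \partial\Delta_n \times \Delta_1) \otimes W'$ to $\widetilde Y$, whose $\widetilde g$-composition is $v|_{W'} \cdot \mathrm{pr}$. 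Since this horn inclusion is anodyne and $\widetilde g$ is a fibration, the map extends to $\widetilde H \colon \Delta_n \otimes W' \otimes \Delta_1 \to \widetilde Y$; the restriction $\widetilde H|_{t = 1}$ is the required strict local lift. The main obstacle is precisely this step: the naive candidate $q\gamma$ only matches the prescribed boundary data up to the homotopy $qr \simeq \mathrm{id}_{\widetilde Y}$, and converting it into a strict local lift requires both the simplicial enrichment (to produce $H$) and a horn-extension along $\widetilde g$.
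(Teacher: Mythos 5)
Your argument is correct, but it follows a genuinely different route from the paper. The paper forms the pullback $\Box = Y\times_Z Y'$ of $g$ along $g'$, factors the induced map from the common source of $f,f'$ through $\Box$ as a trivial cofibration followed by a fibration $h$, observes that the composites $\widetilde g h$ and $\widetilde g' h$ down to $Y'$ and $Y$ are trivial fibrations (hence have the strict local lifting property), and then chases the property around the square using pullback-stability (Lemma~\ref{LEMMAPULLBACK}) and all three parts of the 2-out-of-3 lemma (Lemma~\ref{LEMMA2OUTOF3}); the point of that arrangement is that every comparison map in sight is a \emph{fibration}, so no lift ever needs to be corrected. You instead build a direct comparison $h\colon Y\to Y'$ over $Z$ by lifting $f$ against $g'$, and after factoring $h=pq$ the trivial-fibration leg $p$ is dispatched exactly as in the paper (strict RLP against cofibrations plus parts~1 and~2 of Lemma~\ref{LEMMA2OUTOF3}); but the trivial-cofibration leg $q$ does not itself have the strict local lifting property, so you are forced into the retraction $r$ and the explicit fiberwise homotopy $H$ from $qr$ to $\mathrm{id}_{\widetilde Y}$ over $Z$, followed by a horn-filling correction along the anodyne inclusion $\Delta_n\times\{0\}\cup\partial\Delta_n\times\Delta_1\hookrightarrow\Delta_n\times\Delta_1$. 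All steps check out: the lift $h$ exists and is a weak equivalence by 2-out-of-3, the pushout-product $(\partial\Delta_1\to\Delta_1)\boxplus q$ is a trivial cofibration by SM7 so $H$ exists, and the glued map on the horn agrees on the overlap and lies over $v\cdot\mathrm{pr}$, so the restriction at $t=1$ is a strict local lift. What your approach buys is independence from part~3 of Lemma~\ref{LEMMA2OUTOF3} (and its local-fibration hypothesis) and from Lemma~\ref{LEMMAPULLBACK}; what it costs is length, since the homotopy-correction step re-derives by hand what the paper's pullback trick obtains for free by keeping all comparison maps fibrations.
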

\begin{proof}
Let $g$ have the local lifting property and
form the pull-back
\[ \xymatrix{ 
 \ar[rd]^{\Cof \cap \mathcal{W}} \ar@/^20pt/[rrrrdd]^{f} \ar@/_20pt/[rrdddd]_{f'}  \\
&  \ar[rd]^{h \in\Fib}  \\
& & \Box \ar[rr]^{\widetilde{g'}} \ar[dd]_{\widetilde{g}} & &   \ar[dd]^g \\
\\
& &  \ar[rr]_{g'} & & 
} \]
Since the local lifting property is preserved under pull-back (Lemma~\ref{LEMMAPULLBACK}),  $\widetilde{g}$ has the local lifting property. 
Now $\widetilde{g} h$ and $\widetilde{g}' h$ are trivial fibrations and thus have the lifting property.
Therefore by 2-out-of-3 also
$h$ has the local lifting property. Therefore also $\widetilde{g}'$ has the lifting property.
Again by 2-out-of-3 $g'$ has it as well because all 3 other fibrations in the square have. 
\end{proof}

\newpage
\bibliographystyle{abbrvnat}
\bibliography{6fu}

\begin{thebibliography}{5}
\providecommand{\natexlab}[1]{#1}
\providecommand{\url}[1]{\texttt{#1}}
\expandafter\ifx\csname urlstyle\endcsname\relax
  \providecommand{\doi}[1]{doi: #1}\else
  \providecommand{\doi}{doi: \begingroup \urlstyle{rm}\Url}\fi

\bibitem[Dugger et~al.(2004)Dugger, Hollander, and Isaksen]{DHI04}
D.~Dugger, S.~Hollander, and D.~C. Isaksen.
\newblock Hypercovers and simplicial presheaves.
\newblock \emph{Math. Proc. Cambridge Philos. Soc.}, 136\penalty0 (1):\penalty0
  9--51, 2004.

\bibitem[H\"ormann(2021)]{Hor21}
F.~H\"ormann.
\newblock Model category structures on simplicial objects.
\newblock arXiv: \href{https://arxiv.org/abs/2103.01156}{2103.01156}, 2021.

\bibitem[Jardine(1987)]{Jar87}
J.~F. Jardine.
\newblock Simplicial presheaves.
\newblock \emph{J. Pure Appl. Algebra}, 47\penalty0 (1):\penalty0 35--87, 1987.

\bibitem[Lurie(2009)]{Lur09}
J.~Lurie.
\newblock \emph{Higher topos theory}, volume 170 of \emph{Annals of Mathematics
  Studies}.
\newblock Princeton University Press, Princeton, NJ, 2009.

\bibitem[Ponto and Shulman(2016)]{PS14}
K.~Ponto and M.~Shulman.
\newblock The linearity of traces in monoidal categories and bicategories.
\newblock \emph{Theory Appl. Categ.}, 31:\penalty0 Paper No. 23, 594--689,
  2016.

\end{thebibliography}

\end{document}